\newtheorem{theorem}{Theorem}
\newtheorem{lemma}{Lemma}
\newtheorem{corollary}{Corollary}
\newtheorem{conjecture}{Conjecture}
\begin{document}
	
	\pagestyle{myheadings}

	\title{\bf On the characterization of some algebraically defined bipartite graphs of girth 
eight\footnote{This work was supported by the Natural Science Foundation of China (No. 61977056).}}
	\author{ Ming Xu, Xiaoyan Cheng and Yuansheng Tang\footnote{Corresponding author.}\\
{\it\small School of Mathematical Sciences, Yangzhou University, Jiangsu, China\footnote{Email addresses: mx120170247@yzu.edu.cn(M. Xu), xycheng@yzu.edu.cn(X. Cheng), ystang@yzu.edu.cn(Y. Tang)}}
           }
	\date{}
	\maketitle

    {\noindent\small{\bf Abstract:}
For any field $\mathbb{F}$ and polynomials $f_{2},f_{3}\in\mathbb{F}[x,y]$,
let $\Gamma_{\mathbb{F}}(f_{2},f_{3})$ denote the bipartite graph with vertex partition $P\cup L$,
where $P$ and $L$ are two copies of $\mathbb{F}^{3}$,
and $(p_{1},p_{2},p_{3})\in P$ is adjacent to $[l_{1},l_{2},l_{3}]\in L$ if and only if
$p_{2}+l_{2}=f_{2}(p_{1},l_{1})$ and $p_{3}+l_{3}=f_{3}(p_{1},l_{1})$.
The graph $\Gamma_{3}(\mathbb{F})=\Gamma_{\mathbb{F}}(xy,xy^{2})$ is known to be of girth eight.
When $\mathbb{F}=\mathbb{F}_q$ is a finite field of odd size $q$ or $\mathbb{F}=\mathbb{F}_{\infty}$ is an algebraically closed field of characteristic zero, the graph $\Gamma_{3}(\mathbb{F})$ is conjectured to be the unique one with girth at least eight among those $\Gamma_{\mathbb{F}}(f_{2},f_{3})$ up to isomorphism.
This conjecture has been confirmed for the case that both $f_{2},f_{3}$ are monomials over $\mathbb{F}_q$, and for the case that at least one of $f_{2},f_{3}$ is a monomial over $\mathbb{F}_{\infty}$.
If one of $f_{2},f_{3}\in\mathbb{F}_q[x,y]$ is a monomial, it has also been proved the existence of a positive integer $M$ such that $G=\Gamma_{\mathbb{F}_{q^{M}}}(f_2,f_3)$ is isomorphic to $\Gamma_{3}(\mathbb{F}_{q^{M}})$ provided $G$ has girth at least eight.
In this paper, these results are shown to be valid when the restriction on the polynomials $f_2,f_3$ is relaxed further to that
one of them is the product of two univariate polynomials.
    Furthermore, all of such polynomials $f_2,f_3$ are characterized completely.}

    \vspace{1ex}
    {\noindent\small{\bf Keywords:}
    	Bipartite graph; Cycle; Girth; Generalized quadrangle; Isomorphism}

\section{Introduction}

All graphs considered in this paper are undirected, without loops and multiple edges. Let $G=(V,E)$ be a graph with vertex set $V=V(G)$ and edge set $E=E(G)$, where each edge in $E$ is a two-element subset of $V$. Two vertices $v,v'\in V$ are said adjacent to each other, and written
$v\sim v'$, if $\{v,v'\}\in E$ is an edge.
The order of $G$ is the number of vertices in $V$.
The degree of a vertex $v\in V$ is the number of vertices adjacent to $v$.
If every $v\in V$ has degree $t$, then $G$ is called a $t$-regular graph.
A sequence $(v_1,v_2,\ldots,v_k)\in V^{k}$ of vertices is called a $k$-$\ast$cycle of $G$ if $k\geq 3$, $v_1\sim v_2\sim \cdots\sim v_k\sim v_1$ and $v_i\neq v_{i+2}$, $i=1,2,\ldots,k$, where $v_{k+1}=v_1$, $v_{k+2}=v_2$.
A $k$-$\ast$cycle $(v_1,v_2,\ldots,v_k)$ of $G$ is called a $k$-cycle of $G$ if the vertices $v_1,v_2,\ldots,v_k$ differ from each other.
It is clear that any $k$-$\ast$cycle is a $k$-cycle if $3\leq k\leq 5$.
The graph $G$ is called bipartite if its vertex set can be divided into two parts as $V=P\cup L$ such that
each edge in $E$ consists of a vertex in $P$ and a vertex in $L$.
It is not difficult to show that, in a bipartite graph, there is no $(2k+1)$-$\ast$cycle, any $6$-$\ast$cycle is a $6$-cycle and any $8$-$\ast$cycle is either an $8$-cycle or the concatenation of two $4$-cycles.
In particular, in a bipartite graph with no 4-cycle, any $8$-$\ast$cycle is an $8$-cycle.
If $G$ has some cycles, its girth is defined as the largest integer $k$ such that $G$ contains no $i$-cycles for any $k$ with $3\leq i<k$. Other standard graph theory definitions can be found in \cite{Bollobas98}.

\indent For $ k\geqslant 2 $, let $ g_{k}(n) $ be the greatest number of edges in a graph of order $ n $ with girth at least $ 2k+1 $. It is well known that for sufficiently large $ n $,
$$c_{k}'n^{1+\frac{2}{3k-3+\epsilon}} \leqslant g_{k}(n)\leqslant c_{k}n^{1+\frac{1}{k}}, $$
where $ c_{k}' $ and $ c_{k} $ are positive constants depending only on $ k $, and $\epsilon=0$ if $ k $ is odd and $ \epsilon=1 $ if $ k $ is even (see \cite{Hou17}). The upper bound comes from \cite{Bondy74} and the lower bound from an explicit construction of \cite{Lazebnik95}. The upper bound is known to be sharp in magnitude $ n^{1+\frac{1}{k}} $ only for $ k=2,3,5 $. In this paper, we concentrate on the case of $ k=3 $. A known example which provides such extremal magnitude is the bipartite graph $\Gamma_{3}(\mathbb{F}_{q})$ with vertex partition
$P\cup L$, where $\mathbb{F}_{q}$ is the finite field of $q$ elements, $P$ and $L$ are two copies of $\mathbb{F}^{3}_q$, and $(p_{1},p_{2},p_{3})\in P$ is adjacent to $[l_{1},l_{2},l_{3}]\in L$ if and only if
$p_{2}+l_{2}=p_{1}l_{1}$ and $p_{3}+l_{3}=p^2_{1}l_{1}$.
It is can be shown easily that $\Gamma_{3}(\mathbb{F}_{q})$ is a $ q $-regular bipartite graph of order $ 2q^{3} $ and girth eight.
When $ q$ is an odd prime power, the graph $ \Gamma_{3}(\mathbb{F}_{q})$ is isomorphic to an induced subgraph of the point-line incidence graph of the classical generalized quadrangle $W(q)$ of order $q$ (see \cite{Dmytrenko04,Dmytrenko07,Kronenthal12,Kronenthal16,Kronenthal19}).

From now on, we focus on a generalization of the graph $ \Gamma_{3}(\mathbb{F}_q) $. Let $ \mathbb{F} $ be an arbitrary field, for polynomials $ f_{2},f_{3}\in \mathbb{F}[x,y] $, the graph $ \Gamma_{\mathbb{F}}(f_{2},f_{3}) $ is a bipartite graph with vertex partition $ P\cup L $, where $P$ and $L$ are two copies of $\mathbb{F}^{3}$, and $(p_{1},p_{2},p_{3})\in P$ is adjacent to $[l_{1},l_{2},l_{3}]\in L$ if and only if
$$ p_{2}+l_{2}=f_{2}(p_{1},l_{1})\text{ and } p_{3}+l_{3}=f_{3}(p_{1},l_{1}). $$
When $\mathbb{F}=\mathbb{F}_{q}$, we simplify the notation $ \Gamma_{\mathbb{F}_q}(f_{2},f_{3}) $ to $ \Gamma_{q}(f_{2},f_{3}) $ further.
For odd prime power $q$, it is of interest to find a graph $ \Gamma_{q}(f_{2},f_{3}) $ of girth eight that is not isomorphic to $ \Gamma_{3}(\mathbb{F}_{q}) $, since a new generalized quadrangle could be constructed by ``attaching'' some tree to such graph.
However, many attempts towards this aim failed (see \cite{Dmytrenko04,Dmytrenko07,Kronenthal12, Kronenthal16, Kronenthal19}). On the contrary, the following uniqueness conjecture was proposed in
\cite{Kronenthal16, Kronenthal19}:
\begin{conjecture}
\label{conj01}	If $\mathbb{F}=\mathbb{F}_q$ is a finite field of odd size or $\mathbb{F}=\mathbb{F}_{\infty}$ is an algebraically closed field of characteristic zero, then every graph $\Gamma_{\mathbb{F}}(f_{2},f_{3})$ of girth at least eight is isomorphic to
    $\Gamma_{3}(\mathbb{F})=\Gamma_{\mathbb{F}}(xy,x^{2}y)$.
\end{conjecture}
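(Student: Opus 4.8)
The plan is to convert ``girth at least eight'' into explicit functional equations on the pair $(f_2,f_3)$, exploit the large group of isomorphisms of the family $\Gamma_{\mathbb F}(f_2,f_3)$ to normalize the pair, and then show these equations force $(f_2,f_3)$ to be equivalent to $(xy,x^2y)$. Taking the alternating sum of the edge equations of a putative $4$-cycle through points with first coordinates $a,a'$ and lines with first coordinates $b,b'$ shows that such a cycle exists exactly when the mixed second differences
\[
D_i(a,a',b,b')=f_i(a,b)-f_i(a',b)-f_i(a,b')+f_i(a',b'),\qquad i=2,3,
\]
vanish simultaneously with $a\neq a'$ and $b\neq b'$. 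Likewise a $6$-cycle through distinct $a_1,a_2,a_3$ and distinct $b_1,b_2,b_3$ exists iff the cyclic alternating sums
\[
S_i=f_i(a_1,b_1)-f_i(a_2,b_1)+f_i(a_2,b_2)-f_i(a_3,b_2)+f_i(a_3,b_3)-f_i(a_1,b_3),\qquad i=2,3,
\]
both vanish: in each case solvability of the edge system for the remaining coordinates amounts to the unique dependency of the cycle's signed incidence matrix, which is exactly $S_i=0$. Hence girth at least eight is the conjunction of (NC4) $D_2,D_3$ have no common zero with $a\neq a'$, $b\neq b'$, and (NC6) $S_2,S_3$ have no common zero with $a_1,a_2,a_3$ distinct and $b_1,b_2,b_3$ distinct.

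Next I would catalogue the isomorphisms acting on $(f_2,f_3)$: adding to each $f_i$ an arbitrary sum $u_i(x)+v_i(y)$ of univariate polynomials (translating the second and third coordinates), invertible recombination $(f_2,f_3)\mapsto(\alpha f_2+\beta f_3,\gamma f_2+\delta f_3)$ with $\alpha\delta-\beta\gamma\neq0$, the dilations $x\mapsto sx$, $y\mapsto ty$ and shifts $x\mapsto x+u$, the swap $x\leftrightarrow y$ realizing $P\leftrightarrow L$, and field automorphisms. All preserve the graph up to isomorphism, hence its girth. Using the univariate additions I normalize so that $f_i(x,0)=f_i(0,y)=0$, i.e.\ every monomial of $f_2,f_3$ has positive degree in both variables. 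Since the target pencil $\langle xy,x^2y\rangle$ contains $xy$, the first goal is to locate inside the pencil $\langle f_2,f_3\rangle$ (modulo univariate functions) a member equivalent under this group to $xy$, and to rename it $f_2$; I would attack this through (NC4), using that $D_i$ is divisible by $(a-a')(b-b')$ and studying the common-zero geometry of the cofactors $g_i=D_i/[(a-a')(b-b')]$, combined with dilation and recombination to clear leading terms.

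Granting $f_2=xy$, the analysis of $f_3$ becomes concrete. Now (NC4) holds automatically since $g_2\equiv1$, and the $f_2$-part of (NC6) is the single linear relation $b_1(a_1-a_2)+b_2(a_2-a_3)+b_3(a_3-a_1)=0$. Solving it for $b_3$ and substituting into $S_3$ produces one equation $H(a_1,a_2,a_3,b_1,b_2)=0$, and (NC6) asserts that $H$ has only degenerate zeros. Since, after clearing the denominator $a_3-a_1$, both conditions $b_1=b_3$ and $b_2=b_3$ reduce to $b_1=b_2$, the degenerate locus is just the union of the four hyperplanes $\{a_1=a_2\}$, $\{a_1=a_3\}$, $\{a_2=a_3\}$, $\{b_1=b_2\}$. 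Over $\mathbb F_\infty$ each irreducible component of $V(H)$ is therefore one of these hyperplanes, so $H$ equals a constant times a product of the four corresponding linear forms; matching this rigid factorization against the coefficients of $f_3$ should isolate $f_3=x^2y$ up to the group. This reverses the clean verification for the target graph: for $f_3=x^2y$ the relations $S_2=0$ and $S_3=0$ are linear in $(b_1,b_2,b_3)$ with a rank-two coefficient matrix whose kernel is spanned only by $(1,1,1)$, forcing $b_1=b_2=b_3$ and hence no $6$-cycle.

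The main obstacle is carrying out the two component analyses for genuinely general $f_2,f_3$. The conditions (NC4) and (NC6) couple the two polynomials, and the distinctness constraints mean one must control the full irreducible-component structure of $V(g_2,g_3)$ and of $V(H)$ as the coefficients of $f_2,f_3$ vary, for which no uniform handle is known; this is exactly where the hypothesis that one of $f_2,f_3$ be a product of univariate polynomials is decisive, since it renders $H$ explicitly factorable and collapses the component count. Over $\mathbb F_q$ there is the further difficulty that a non-diagonal component may carry no $\mathbb F_q$-points for small $q$, so the ``positive-dimensional locus yields a cycle'' step (Lang--Weil for large $q$) must be supplemented by explicit Hasse--Weil curve counts and a finite verification for the remaining small odd $q$. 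Removing the product hypothesis, and thereby settling the conjecture in full, is in my view precisely this component-structure problem.
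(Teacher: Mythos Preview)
The statement is a \emph{conjecture}; the paper does not prove it, only the special case in which one of $f_2,f_3$ is a product $f(x)g(y)$ of univariate polynomials (Theorems~\ref{one} and~\ref{two}). There is thus no proof in the paper to compare against, and your proposal---which you yourself flag as incomplete---does not close the gap either.

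Your cycle criteria and isomorphism catalogue are correct and agree with the paper's $\Delta_k$ formalism and Lemma~\ref{iso}. The genuine obstruction is exactly the one you name: the step ``locate inside the pencil $\langle f_2,f_3\rangle$ a member equivalent to $xy$'' has no known justification for arbitrary $f_2,f_3$, and without it the subsequent analysis of $H$ never begins. Even granting $f_2=xy$, your component-structure sketch (``$V(H)$ lies in four hyperplanes, hence $H$ is a monomial in those linear forms, hence $f_3$ is determined'') hides all the work in the final implication; the published proofs of the monomial case carry it out by quite different means. For contrast, the paper's method in its restricted setting does not analyze $V(H)$ globally at all: it substitutes specific one-parameter families of would-be $6$-cycles (the tuples $S_4,S_5,S_6$ in Lemma~\ref{8} and Theorem~\ref{t2}) so that the no-cycle condition becomes a univariate $t$-polynomial with no nonzero root in the large extension $\mathbb F_{q^M}$, which via Lemma~\ref{7} forces all but one coefficient to vanish. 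That device leans essentially on the factored shape $f_2=f(x)g(y)$, and its failure to extend is precisely why the paper stops short of the full conjecture.
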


When $\mathbb{F}=\mathbb{F}_q$ is a finite field of odd size and $f_2, f_3$ are monomials, Conjecture~\ref{conj01} was investigated in
\cite{Dmytrenko04,Dmytrenko07,Kronenthal12} and confirmed in \cite{Hou17}.
When $\mathbb{F}=\mathbb{F}_{\infty}$ is an algebraically closed field of characteristic zero and at least one of $f_2, f_3$ is monomial, Conjecture~\ref{conj01} was confirmed in \cite{Kronenthal16,Kronenthal19}.
When $\mathbb{F}=\mathbb{F}_q$ is a finite field of odd size and one of $f_2, f_3$ is monomial,
the following result was also shown in \cite{Kronenthal19}:
If $q$ is a power of some odd prime $p$ and $f\in \mathbb{F}_{q}[x,y] $ has degree at most $p-2$ with respect to each of $x$ and $y$, then, for any integers $k$, $m$ coprime to p, there exists a positive integer $M=M(k,m,q)$ such that, for all positive integers $r$, every graph $\Gamma_{{q^{Mr}}}(x^{k}y^{m},f)$ of girth at least eight is isomorphic to $\Gamma_{3}(\mathbb{F}_{q^{Mr}})$.
In this paper, we will show that the main results of \cite{Kronenthal19} are still true if one of $f_2,f_3$ is of form $f(x)g(y)$ for univariate polynomials $f,g\in\mathbb{F}[x]$.

For any field $ \mathbb{F} $, let $\mathbb{F}^{*}=\mathbb{F}\backslash \{0\}$. For positive integer $k$, let $[1,k]$ denote the set $\{1,2,\cdots,k\} $, $\mathbb{F}[x]_{k}$ the set of polynomials in $\mathbb{F}[x]$ of degree at most $k$
and $\mathbb{F}[x,y]_{k}$ the set of polynomials in $\mathbb{F}[x,y]$ of degree at most $k$ with respect to each of $x$ and $y$, respectively.
Through this paper, we assume that $q$ is a power of some prime $p$ and $m,n$ are positive integers such that
\begin{align}\label{eq1}
q > \max\{2mn+3,mn+3n+1,n(n+1)+2\}.
\end{align}
Let $M=M(mn)$ be the least common multiple of the integers $ 2,3,\cdots ,mn $. Clearly, any polynomials $ T(x)\in \mathbb{F}_{q}[x]_{mn} $ can be decomposed completely in $\mathbb{F}_{q^{M}}$. For any $a\in \mathbb{F}_{q}$, let $\rho_{a}(x)=x(x-a)\in\mathbb{F}_{q}[x]$. Let $K_{p}=\{p^{j}\mid j\geqslant 0\}$. For $u,v\in K_{p}$, let $\Phi_{p}(u,v)=\{(i,j)\in K_{p}^{2}:iv=ju\}$.
Let $ f,g\in \mathbb{F}_{q}[x]_{m} $ be monic polynomials with $ f(0)=g(0)=0 $ and 
\begin{align}
h(x,y)=\sum_{1\leq i,j\leq n} h_{i,j}x^{i}y^{j}\in \mathbb{F}_{q}[x,y]_n\label{eq1'}
\end{align}
be a nonzero polynomial. For $a\in\mathbb{F}_q$, $u,v\in K_p$ and the polynomial $h$ given in (\ref{eq1'}), let
$\mu_{a,u,v}(h)$, $\nu_{a,u,v}(h)$ and $\pi_{u,v}(h)$ denote the polynomials in $\mathbb{F}_q[x,y]$ defined respectively by
\begin{gather*}
    \mu_{a,u,v}(h)(x,y)=h(x,y)-\sum_{(i,j)\in\Phi_{p}(u,v)}h_{2i,j}\rho_{a}^{i}(x)y^{j},\\
    \nu_{a,u,v}(h)(x,y)=h(x,y)-\sum_{(i,j)\in\Phi_{p}(v,u)}h_{i,2j}x^{i}\rho_{a}^{j}(y),\\
    \pi_{u,v}(h)(x,y)=h(x,y)-\sum_{(i,j)\in\Phi_{p}(u,v)}h_{i,j}x^{i}(x)y^{j}.
\end{gather*}

The main result of this paper is as follows.

\begin{theorem}\label{one}
The graph $G=\Gamma_{{q^{M}}}(f(x)g(y),h(x,y))$ is isomorphic to $\Gamma_{3}(\mathbb{F}_{q^{M}})$ if it has girth at least eight. 
Furthermore, $G$ has girth at least eight
if and only if there are some $ a\in \mathbb{F}_{q} $, $ \zeta\in\mathbb{F}_{q}^{*} $, $ u,v\in K_{p}\cap[1,m] $ and $s\in K_{p}\cap[1,n]$ such that one of the following is valid.
\begin{enumerate}
  \item[(i)] $ f(x)=\rho_{a}^{u}(x), g(y)=y^{v} $ and $\mu_{a,u,v}(h)(x,y)=\zeta x^{su/v}y^{s}$.
  \item[(ii)] $ f(x)=x^{v}, g(y)=\rho_{a}^{u}(y) $ and $\nu_{a,u,v}(h)(x,y)=\zeta x^{s}y^{su/v}$.
  \item[(iii)] $ f(x)=x^{u}, g(y)=y^{v} $ and $\pi_{u,v}(h)(x,y)=\zeta x^{2su/v}y^{s}$ or $\zeta x^{s}y^{2sv/u}$.
  \item[(iv)] $p=2$, $a\neq 0$ and either
  \begin{enumerate}
    \item $f(x)=\rho_a^u(x)$, $g(y)=y^{2su}$ and $\mu_{a,u,2su}(h)(x,y)=\zeta xy^{s}$, or
    \item $f(x)=x^{2su}$, $g(y)=\rho_a^u(y)$ and $\nu_{a,u,2su}(h)(x,y)=\zeta x^{s}y$.
  \end{enumerate}
\end{enumerate}
\end{theorem}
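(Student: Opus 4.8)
The plan is to reduce the statement to the analysis of short cycles through the ``origin'' vertices and to a combinatorial analysis of the coefficient pattern of $h$. First I would recall the standard fact (used in all the cited works on these algebraically defined graphs) that $\Gamma_{\mathbb{F}}(f_2,f_3)$ has girth at least eight if and only if it contains no $4$-cycle and no $6$-cycle, and that because the graph is vertex-transitive under the obvious group of ``translations'' in the last two coordinates, it suffices to test the cycles passing through the vertex $(0,0,0)\in P$. Writing out the adjacency relations along a putative $4$-cycle $(0,0,0)\sim[l_1,\ast,\ast]\sim(p_1,\ast,\ast)\sim[l_1',\ast,\ast]\sim(0,0,0)$ with $l_1\neq l_1'$, the absence of $4$-cycles becomes the statement that for every $p_1\in\mathbb{F}_{q^M}$ the map $y\mapsto\bigl(f(p_1)g(y),h(p_1,y)\bigr)$ (equivalently, after the symmetric reduction, $x\mapsto\bigl(f(x)g(l_1),h(x,l_1)\bigr)$) is injective on lines. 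This is exactly the kind of ``$f_2$-injectivity/$f_3$-compatibility'' condition isolated in \cite{Kronenthal19}; the first serious step is to show that with $f_2=f(x)g(y)$ of product form, the $4$-cycle-free condition forces $f$ and $g$ to be, up to the allowed normalizations, powers of $x$ or of $\rho_a$, and to pin down $u=\deg$-type exponents landing in $K_p\cap[1,m]$. Here I would use the hypothesis \eqref{eq1} on the size of $q$ relative to $m,n$ to rule out cancellation of leading terms and to guarantee that certain polynomial identities over $\mathbb{F}_{q^M}$ are genuine identities of polynomials, not just coincidences on $\mathbb{F}_{q^M}$-points.

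Next I would turn to the $6$-cycle condition, which is where the polynomial $h$ gets constrained. A $6$-cycle through $(0,0,0)$ unwinds into a system in six field variables $l_1,p_1,l_1',p_1',l_1'',p_1''$ (three in $P$-slots, three in $L$-slots, with $(0,0,0)$ accounting for one pair) subject to the two defining equations at each of the six edges; eliminating the last two coordinates leaves a pair of polynomial equations purely in the first coordinates, and girth $\geq 8$ says this system has only the ``degenerate'' solutions. Because $f_2$ is now known to have product form with $f,g\in\{x^k,\rho_a^k\}$, the $f_2$-equation of the $6$-cycle system simplifies dramatically — it essentially says a product of values of $g$ (or of $\rho_a$) along the cycle equals another such product — and I would use this to parametrize the $6$-cycles by a small number of free parameters. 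Substituting into the $f_3$-equation and demanding it hold identically then yields a functional equation for $h$ of the shape $h(x,y)-(\text{correction term supported on }\Phi_p)$ $=$ a single monomial; unwinding the definitions of $\mu_{a,u,v}$, $\nu_{a,u,v}$, $\pi_{u,v}$ shows the correction term is exactly the one in the theorem, the support set $\Phi_p(u,v)$ arising because in characteristic $p$ the Frobenius-type identities $\rho_a(x)^{p^j}=\rho_{a^{p^j}}(x^{p^j})$ let extra terms ``hide'' inside $\rho_a^i(x)y^j$ without creating cycles. The four cases (i)--(iv) correspond to the four possibilities $(f,g)=(\rho_a^u,y^v),(x^v,\rho_a^u),(x^u,y^v)$ and, in characteristic $2$ only, a degenerate merging where $\rho_a(x)^2=x^2(x-a)^2$ interacts with the square in the original $x^2y$; case (iv) must be separated because when $p=2$ the map $x\mapsto x^2$ is additive and the usual argument distinguishing $\rho_a$ from $x$ (via the cross term) collapses.

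Finally, having shown that girth $\geq 8$ forces $h$ into one of the listed forms, I would verify the converse by exhibiting, for each of (i)--(iv), an explicit isomorphism $G\cong\Gamma_3(\mathbb{F}_{q^M})=\Gamma_{\mathbb{F}_{q^M}}(xy,x^2y)$. Since $\Gamma_3$ has girth exactly eight, this simultaneously proves $G$ has girth $\geq 8$ and proves the first sentence of the theorem. The isomorphism should be built from a field change of variables $p_1\mapsto\alpha p_1^{\,e}+\beta$, $l_1\mapsto\gamma l_1^{\,e'}+\delta$ on the first coordinates (the exponents $e,e'$ being powers of $p$, which is why working over $\mathbb{F}_{q^M}$ rather than $\mathbb{F}_q$ is essential — $M$ is chosen exactly so every needed $p$-power root exists) together with an affine-linear substitution on the last two coordinates absorbing the correction terms and the constant $\zeta$; the exponent relations like $su/v$ appearing in the monomials are precisely what make the degrees match up after such a substitution.

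The main obstacle I expect is the $6$-cycle elimination: controlling all solutions of the eliminated two-variable polynomial system and proving that the ``identically zero'' requirement is equivalent to $h$ lying in the prescribed family, without losing solutions that exist only because of low characteristic or small field size. This is where the precise inequality \eqref{eq1} and the careful bookkeeping of the Frobenius-stable index set $\Phi_p(u,v)$ do the real work, and where the characteristic-$2$ case (iv) forces a genuinely separate argument rather than a uniform one.
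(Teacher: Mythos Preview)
Your overall architecture---pin down $f,g$ first, then constrain $h$ via the $6$-cycle condition, then check the converse by exhibiting isomorphisms---matches the paper. But there is a genuine gap in the first step: the $4$-cycle condition alone does \emph{not} force $f$ and $g$ to be powers of $x$ or of $\rho_a$. In the paper the $4$-cycle condition (Lemma~\ref{3}) only yields that whenever $f(a)=f(b)$ with $a\neq b$ the map $y\mapsto h(a,y)-h(b,y)$ is injective; this constrains $h$ relative to $f$, not $f$ itself. The restrictions on $f,g$ come from the $6$-cycle condition: Lemma~\ref{4} uses a $6$-cycle template to show that at least one of $f,g$ is injective on $\mathbb{F}_{q^M}$, and Lemma~\ref{5} uses another $6$-cycle to show neither can take the same value three times. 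Only then does the purely algebraic Lemma~\ref{6} (which uses the complete splitting guaranteed by the choice of $M$) conclude $f,g\in\{x^u,\rho_a^u\}$. So your plan to extract the shape of $f,g$ from the $4$-cycle equation would fail as stated.

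The second thing you are missing is the concrete mechanism that drives the $6$-cycle analysis of $h$. The paper does not attempt to ``control all solutions of the eliminated two-variable polynomial system.'' Instead it designs explicit one-parameter families of would-be $6$-cycles $S_4,S_5,S_6$ (in first coordinates only) for which $\Delta_3(f(x)g(y))$ vanishes identically; girth $\geq 8$ then says the resulting $t$-polynomial $\Delta_3(h)(S_\bullet)=\sum_j e_j(a,b)\,t^j$ has \emph{no root in} $\mathbb{F}_{q^M}^*$. Since $M$ is chosen so that every polynomial of degree $\leq mn$ splits over $\mathbb{F}_{q^M}$ (not, as you wrote, so that $p$-power roots exist), Lemma~\ref{7} converts ``no root in $\mathbb{F}_{q^M}^*$'' into ``all $e_j$ but one are the zero polynomial,'' and that single surviving index is what produces the monomial $\zeta x^{su/v}y^s$ etc. Your outline never isolates this ``no-root $\Rightarrow$ single-term'' step, which is the engine of the whole argument; without it the passage from the $6$-cycle functional equation to the stated form of $\mu_{a,u,v}(h)$, $\nu_{a,u,v}(h)$, $\pi_{u,v}(h)$ remains unjustified. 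Your plan for the converse (building isomorphisms to $\Gamma_3(\mathbb{F}_{q^M})$ from Frobenius twists and affine coordinate changes) is correct and is exactly what the paper does via Lemma~\ref{iso}.
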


The following theorem is an analog of Theorem \ref{one} for the case that $\mathbb{F}= \mathbb{F}_{\infty}$ is an algebraically closed field of characteristic zero.

\begin{theorem}\label{two}
	Suppose that $ f,g\in \mathbb{F}_{\infty}[x] $ are monic polynomials with $ f(0)=g(0)=0 $ and $ h(x,y)=\sum_{ i,j\geqslant 1} h_{i,j}x^{i}y^{j}\in \mathbb{F}_{\infty}[x,y] $ is a nonzero polynomial. The graph $G= \Gamma_{\mathbb{F}_{\infty}}(f(x)g(y),h(x,y)) $ 
is isomorphic to $\Gamma_{3}(\mathbb{F}_{\infty})$ if $G$ has girth at least eight. 
Furthermore, $G$ has girth at least eight if and only if there are some  $ a\in \mathbb{F}_{\infty} $, $ \zeta\in\mathbb{F}_{\infty}^{*} $ such that one of the following is valid.
\begin{enumerate}
  \item[(i)] $ f(x)=\rho_{a}(x), g(y)=y $ and $ h(x,y)=\zeta xy+h_{2,1}\rho_{a}(x)y. $
  \item[(ii)] $ f(x)=x, g(y)=\rho_{a}(y) $ and $ h(x,y)=\zeta xy+h_{1,2}x\rho_{a}(y). $
  \item[(iii)] $ f(x)=x, g(y)=y $ and $ h(x,y)=\zeta\rho_{a}(x)y$ or $\zeta x\rho_{a}(y)$.
\end{enumerate}
\end{theorem}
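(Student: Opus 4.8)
\emph{Overview and sufficiency.}
I would split the biconditional of Theorem~\ref{two} into the routine sufficiency and the substantive necessity, and prove necessity by reducing, through admissible coordinate changes, to the case (already settled in \cite{Kronenthal16,Kronenthal19}) in which one of the two defining polynomials is a monomial. Recall that $\Gamma_{\mathbb{F}}(f_2,f_3)$ is unchanged up to isomorphism under each of the following admissible operations: interchanging the second and third coordinates on both sides (which swaps $f_2$ and $f_3$); interchanging $P$ and $L$ (which swaps the two arguments of $f_2,f_3$); rescaling any of the six coordinates by a nonzero constant; replacing $p_2$ by $p_2-r(p_1)$, and likewise for $l_2$ and for the third coordinates (which changes $f_2$, resp.\ $f_3$, by a univariate polynomial); replacing $p_3,l_3$ by $p_3+cp_2,l_3+cl_2$ (which adds $cf_2$ to $f_3$); and translating $p_1,l_1$. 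For each of (i)--(iii) a short sequence of these operations carries $G$ to $\Gamma_{\mathbb{F}_\infty}(xy,x^2y)=\Gamma_3(\mathbb{F}_\infty)$: in (iii) one adds $\zeta a\,xy$ to, then rescales, the $f_3$-slot; in (i) one first subtracts $h_{2,1}\rho_a(x)y=h_{2,1}f(x)g(y)$ from the $f_3$-slot, turning $h$ into $\zeta xy$, then interchanges the two slots so that $xy$ becomes $f_2$ and $\rho_a(x)y=x^2y-axy$ becomes $f_3$, and finally absorbs $-axy$ and rescales; case (ii) is the $P\leftrightarrow L$ image of (i). Since $\Gamma_3(\mathbb{F}_\infty)$ has girth eight, each of (i)--(iii) yields a graph of girth at least eight isomorphic to $\Gamma_3(\mathbb{F}_\infty)$.

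\emph{Necessity: the four-cycle condition and degree bounds.}
Assume $G$ has girth at least eight, equivalently that $G$ has neither a $4$-cycle nor a $6$-cycle. For $f_2=f(x)g(y)$ the four-cycle obstruction factors: a $4$-cycle through $p_1\ne p_1'$ on the $P$-side and $l_1\ne l_1'$ on the $L$-side exists exactly when $\bigl(f(p_1)-f(p_1')\bigr)\bigl(g(l_1)-g(l_1')\bigr)=0$ and simultaneously $h(p_1,l_1)-h(p_1',l_1)-h(p_1,l_1')+h(p_1',l_1')=0$. Over the algebraically closed field $\mathbb{F}_\infty$ a polynomial of degree $\ge 2$ is not injective, so if $\deg f\ge 2$ there is a one-parameter family of pairs $p_1\ne p_1'$ with $f(p_1)=f(p_1')$, and for each such pair the absence of a $4$-cycle forces $h(p_1,y)-h(p_1',y)$ to be an injective, hence affine, polynomial in $y$. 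Writing $h=\sum_j c_j(x)y^j$ this says each $c_j$ with $j\ge 2$ is constant on the fibres of $f$, hence $c_j\in\mathbb{F}_\infty[f]$; symmetrically, if $\deg g\ge 2$ the coefficient of $x^i$ in $h$ lies in $\mathbb{F}_\infty[g]$ for every $i\ge 2$. One then checks these constraints are incompatible with girth eight unless $\deg f,\deg g\le 2$ and not both equal $2$: when $\deg f=\deg g=2$ the four-cycle conditions pin $h$ down to $\lambda xy+\mu f(x)g(y)$, which reduces by the admissible operations to $\Gamma_{\mathbb{F}_\infty}(xy,f(x)g(y))$, a monomial graph whose $f_3$-slot retains an ineradicable $x^2y^2$-term and hence is not isomorphic to $\Gamma_3(\mathbb{F}_\infty)$, contradicting \cite{Kronenthal16,Kronenthal19}; and the cases $\deg f\ge 3$ or $\deg g\ge 3$ are ruled out using the (longer) six-cycle condition. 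Thus, up to the $P\leftrightarrow L$ symmetry, $g(y)=y$ and $f(x)\in\{x,\rho_a(x)\}$ for some $a\in\mathbb{F}_\infty$.

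\emph{Reduction to a monomial graph and conclusion.}
If $f(x)=x$ then $f_2=xy$ is already a monomial. If $f(x)=\rho_a(x)$, translating $p_1\mapsto p_1+a/2$ replaces $f(x)g(y)$ by $\bigl(x^2-a^2/4\bigr)y$, and absorbing $-(a^2/4)y$ (a polynomial in $l_1$ alone) into $l_2$ makes $f_2=x^2y$ a monomial, while $h$ is carried to $h(x+a/2,y)$. In either case $G\cong\Gamma_{\mathbb{F}_\infty}(M,\tilde h)$ with $M$ a monomial, so by \cite{Kronenthal16,Kronenthal19} $G$ has girth at least eight if and only if $G\cong\Gamma_3(\mathbb{F}_\infty)$, and that work also identifies exactly which $\tilde h$ occur, namely $\tilde h$ equal to a nonzero multiple of $xy$ modulo multiples of $M$. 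Undoing the translation and the slot interchange, and re-expressing ``$\tilde h$ modulo a multiple of $x^2y$'' as ``$h$ modulo a multiple of $\rho_a(x)y$'', turns this back into the normal forms (i)--(iii); the isomorphism with $\Gamma_3(\mathbb{F}_\infty)$ is then automatic because the reduced graph is a monomial graph covered by the cited result.

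\emph{Main obstacle.}
The genuinely hard step is the exclusion of the high-degree cases $\deg f\ge 3$ (and, by symmetry, $\deg g\ge 3$): the four-cycle analysis does not bound these degrees, so one must extract the bound from the six-cycle condition applied to an $h$ that is already constrained to have its higher coefficients polynomial in $f$, and these six-cycle identities are combinatorially heavy. A secondary, essentially bookkeeping, difficulty is to follow $h$ precisely through the translation $p_1\mapsto p_1+a/2$ and through the interchange of the $f_2$- and $f_3$-slots, so that the final description of $h$ comes out in the exact shape $\zeta xy+h_{2,1}\rho_a(x)y$ of (i) (and its mirror in (ii)), rather than merely ``up to admissible equivalence''.
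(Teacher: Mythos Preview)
Your strategy—bound $\deg f,\deg g$, translate $p_1$ so that $f(x)g(y)$ becomes a monomial, and then invoke \cite{Kronenthal16,Kronenthal19}—is a genuinely different route from the paper's. The paper does not reduce to the monomial case at all; it simply reruns the entire Theorem~\ref{one} argument (Lemmas~\ref{3}--\ref{9}, Corollary~1, Theorems~\ref{t1}--\ref{t2}) over $\mathbb{F}_\infty$ with $K_p$ replaced by $\{1\}$, obtaining the characterisation of $h$ directly. Your route would be shorter provided the citations deliver what you need, but be careful: the paper only credits \cite{Kronenthal16,Kronenthal19} with confirming Conjecture~\ref{conj01} (the isomorphism $G\cong\Gamma_3(\mathbb{F}_\infty)$), not with the explicit list of admissible $\tilde h$ you rely on to recover the forms (i)--(iii) after undoing the translation.

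There is, however, a concrete error in your degree-reduction step. The claim that ``when $\deg f=\deg g=2$ the four-cycle conditions pin $h$ down to $\lambda xy+\mu f(x)g(y)$'' is false. Take $f(x)=x^2$, $g(y)=y^2$: your four-cycle analysis only yields $h_{i,j}=0$ whenever ($i\ge2$ and $j$ odd) or ($j\ge2$ and $i$ odd), together with $h_{1,1}\ne0$; thus $h=xy+x^4y^2$ survives, and your reduction to $\Gamma_{\mathbb{F}_\infty}(xy,f(x)g(y))$ does not go through. The paper handles both this case and your acknowledged ``main obstacle'' ($\deg f\ge3$) by short six-cycle constructions that use nothing about $h$ beyond Lemma~\ref{3}. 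If neither $f$ nor $g$ is injective, Lemma~\ref{4} builds a $6$-cycle $(a,b,t';t,c,d)$ from any pair $f(a)=f(b)$, $g(c)=g(d)$ using that $\theta_{a,b}$ and $\phi_{c,d}$ are bijections; and if some value of $f$ has three distinct preimages $x_0,x_1,x_2$, Lemma~\ref{5} builds a $6$-cycle $(x_0,x_1,x_2;y_0,y_1,y_2)$ from the identity $\theta_{x_0,x_1}+\theta_{x_1,x_2}+\theta_{x_2,x_0}=0$. Over an algebraically closed field of characteristic zero these two lemmas immediately give $\{\deg f,\deg g\}\subseteq\{1,2\}$ with at most one of them equal to $2$, which is exactly the reduction you want—so you should replace your flawed four-cycle argument for the $(2,2)$ case, and your unspecified ``combinatorially heavy'' six-cycle argument for the degree $\ge3$ case, by these.
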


This paper is organized as follows. 
In Section 2 we show some preliminaries, including a necessary and sufficient condition of $2k$-*cycles in $\Gamma_{\mathbb{F}}(f_{2},f_{3})$, 
some isomorphisms of $\Gamma_{\mathbb{F}}(f_{2},f_{3})$, some simple conclusions on a few monomial graphs, and a simple but useful lemma on
the characterization of polynomials according to their roots in some extension field.
In Sections 3 to 5, we characterize the monic polynomials $f,g\in\mathbb{F}_q[x]_m$ and nonzero polynomial $h\in\mathbb{F}_q[x,y]_n$
under the conditions $f(0)=g(0)=h(0,y)=h(x,0)=0$ and that $G=\Gamma_{q^M}(f(x)g(y),h(x,y))$ has girth at least eight.
In Section 6, we complete the proof of Theorem~\ref{one}, and 
make some concluding remarks, including a simple illustration for the proof of Theorem\ref{two}.

\section{Preliminaries}

For $k\geq 2$, let $\Delta_k$ be the function defined by
\begin{align*}
\Delta_k :\mathbb{F}[x, y]&\rightarrow\mathbb{F}[x_1,\ldots, x_k; y_1,\ldots, y_k],\\
f(x, y)&\mapsto \sum_{i=1}^k(f(x_i, y_i)-f(x_{i+1}, y_i)),
\end{align*}
where $x_{k+1}=x_1$.
Suppose $k\geq 2$, $S=(a_1,\ldots,a_k;r_1,\ldots,r_k)\in \mathbb{F}^{2k}$ and $f_2,f_3\in\mathbb{F}[x,y]$,
it is clear that the graph $G=\Gamma_{\mathbb{F}}(f_{2},f_{3})$ contains a $2k$-$\ast$cycle of form
\begin{align}\label{0001}
((a_{1},b_{1},c_{1}),[r_{1},s_{1},t_{1}], \ldots,(a_{k},b_{k},c_{k}),[r_{k},s_{k},t_{k}])
\end{align}
if and only if
	\begin{equation}
	\begin{cases}
	\Delta_{k}(f_{2})(S)=\Delta_{k}(f_{3})(S)=0,\\
	a_i\neq a_{i+1},\,r_i\neq r_{i+1}, \,i\in[1,k],
	\end{cases}
	\end{equation}
where $a_{k+1}=a_1$ and $r_{k+1}=r_1$ (see \cite{Dmytrenko07}). If $G$ contains some $2k$-$\ast$cycles of form (\ref{0001}),
we also call $S=(a_1,\ldots,a_k;r_1,\ldots,r_k)$ a $2k$-$\ast$cycle of $G$ for simplicity.

Some useful isomorphisms of the graphs $\Gamma_{\mathbb{F}}(f_{2}, f_{3})$ are integrated in the following lemma.

\begin{lemma}\label{iso}
	Assume $ f_{2}, f_{3}\in \mathbb{F}[x,y] $. Then
\begin{enumerate}
  \item[(i)] $\Gamma_{\mathbb{F}}(f_{2}, f_{3})\cong \Gamma_{\mathbb{F}}(f_{3}, f_{2})$.
  \item[(ii)] $\Gamma_{\mathbb{F}}(f_{2}, f_{3})\cong \Gamma_{\mathbb{F}}(\bar{f}_{2}, \bar{f}_{3})$, where $\bar{f}(x,y)=f(y,x)$.
  \item[(iii)] For any $ \alpha\in \mathbb{F^{*}},\,\Gamma_{\mathbb{F}}(f_{2}, f_{3})\cong \Gamma_{\mathbb{F}}(f_{2}, \alpha f_{3})$.
  \item[(iv)] For any $ \beta\in \mathbb{F},\,\Gamma_{\mathbb{F}}(f_{2}, f_{3})\cong \Gamma_{\mathbb{F}}(f_{2}, f_{3}+\beta f_{2}) $.
  \item[(v)] For any $ t\in \mathbb{F}[x], \,\Gamma_{\mathbb{F}}(f_{2}, f_{3})\cong \Gamma_{\mathbb{F}}(f_{2}(x,y), f_{3}(x,y)+t(x)) $.
\end{enumerate}
Furthermore, if $ \mathbb{F}=\mathbb{F}_{q} $, then, for any $ u\in K_{p} $,
\begin{enumerate}
  \item[(vi)] $\Gamma_{{q}}(f_{2}, f_{3})\cong \Gamma_{{q}}(f_{2}(x^{u},y), f_{3}(x^{u},y))$.
  \item[(vii)] $\Gamma_{{q}}(f_{2}, f_{3})\cong \Gamma_{{q}}(f_{2}^{u}, f_{3})$.
\end{enumerate}
\end{lemma}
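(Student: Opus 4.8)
The plan is to verify each of (i)--(vii) by exhibiting an explicit pair of maps $(\phi,\psi)$, where $\phi$ acts on the point set $P$ and $\psi$ on the line set $L$ (or, for (ii), a single map that interchanges the two sides), and then checking that $\phi,\psi$ are bijections and that they carry the two defining equalities of one graph onto those of the other. Since adjacency in every $\Gamma_{\mathbb F}(f_2,f_3)$ is given by a pair of equalities, transforming these equalities in one direction automatically yields the ``if and only if'' required for a graph isomorphism, so no separate converse argument is needed.

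For (i) I would take $\phi=\psi$ to be the coordinate swap $(x_1,x_2,x_3)\mapsto(x_1,x_3,x_2)$, which interchanges $p_2+l_2=f_2(p_1,l_1)$ with $p_3+l_3=f_3(p_1,l_1)$. For (ii), send $(p_1,p_2,p_3)\in P$ to $[p_1,p_2,p_3]$ in the line set of $\Gamma_{\mathbb F}(\bar f_2,\bar f_3)$ and $[l_1,l_2,l_3]\in L$ to $(l_1,l_2,l_3)$ in its point set; the relation $p_i+l_i=f_i(p_1,l_1)$ then reads $l_i+p_i=f_i(p_1,l_1)=\bar f_i(l_1,p_1)$, which is exactly adjacency in $\Gamma_{\mathbb F}(\bar f_2,\bar f_3)$. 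For (iii) take $\phi=\psi\colon(x_1,x_2,x_3)\mapsto(x_1,x_2,\alpha x_3)$; for (iv) take $\phi=\psi\colon(x_1,x_2,x_3)\mapsto(x_1,x_2,x_3+\beta x_2)$ and use that $(p_3+\beta p_2)+(l_3+\beta l_2)=f_3(p_1,l_1)+\beta f_2(p_1,l_1)$ is equivalent to the original pair of equalities once the first one is imposed; for (v) take $\psi=\mathrm{id}$ on $L$ and $\phi\colon(p_1,p_2,p_3)\mapsto(p_1,p_2,p_3+t(p_1))$ on $P$, so that $p_3+l_3=f_3(p_1,l_1)$ becomes $(p_3+t(p_1))+l_3=f_3(p_1,l_1)+t(p_1)$.

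For (vi) and (vii) I would use that $u=p^{j}\in K_p$ makes $x\mapsto x^{u}$ a power of the Frobenius automorphism of $\mathbb F_q$, hence a bijection of $\mathbb F_q$ that is additive. In (vi) take $\psi=\mathrm{id}$ and $\phi\colon(p_1,p_2,p_3)\mapsto(p_1^{1/u},p_2,p_3)$, where $p_1^{1/u}$ denotes the unique $u$-th root of $p_1$ in $\mathbb F_q$; then $p_2+l_2=f_2(p_1,l_1)=f_2((p_1^{1/u})^{u},l_1)$, and likewise for $f_3$, which is precisely adjacency in $\Gamma_{q}(f_2(x^u,y),f_3(x^u,y))$. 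In (vii) take $\phi\colon(p_1,p_2,p_3)\mapsto(p_1,p_2^{u},p_3)$ and $\psi\colon[l_1,l_2,l_3]\mapsto[l_1,l_2^{u},l_3]$; by additivity $p_2^{u}+l_2^{u}=(p_2+l_2)^{u}=f_2(p_1,l_1)^{u}=f_2^{u}(p_1,l_1)$, which is adjacency in $\Gamma_q(f_2^u,f_3)$. There is no genuine obstacle here; the only points needing care are that in (v) the polynomial $t$ must depend only on the point variable $x$ so that the correction $t(p_1)$ lives entirely on the point side, and that (vi)--(vii) genuinely use both the bijectivity and the additivity of $x\mapsto x^{p^{j}}$ over $\mathbb F_q$, which is the reason those two items are stated only for $\mathbb F=\mathbb F_q$.
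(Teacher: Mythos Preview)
Your proposal is correct and matches the paper's approach: the paper cites \cite{Kronenthal16} for (i)--(v) (your explicit maps are the standard ones used there) and for (vi)--(vii) gives exactly the maps you describe, writing the inverse Frobenius in (vi) as $p_1\mapsto p_1^{v}$ where $v$ is the least positive integer with $vu$ a power of $q$, which is precisely your $p_1^{1/u}$.
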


\begin{proof}
	We refer the reader to \cite{Kronenthal16} for the proofs of $ (i)\sim (v) $.

For $ (vi) $, let $v$ be the least positive integer such that $vu$ is a power of $q$ and $\pi_1$ the map from $V(\Gamma_{{q}}(f_{2}, f_{3}))$ to $V(\Gamma_{{q}}(f_{2}(x^{u},y), f_{3}(x^{u},y)))$
defined by $ \pi_1 :(p_{1},p_{2},p_{3})\mapsto (p_{1}^{v},p_{2},p_{3}) $ and $ \pi_1 :[l_{1},l_{2},l_{3}]\mapsto [l_{1},l_{2},l_{3}]$.
Then, $\pi_1$ is a graph isomorphism.

For $ (vii) $, let $\pi_2$ be the map from $V(\Gamma_{{q}}(f_{2}, f_{3}))$ to $V(\Gamma_{{q}}(f^{u}_{2},$ $ f_{3}))$
defined by $ \pi_2 :(p_{1},p_{2},p_{3})\mapsto (p_{1},p_{2}^{u},p_{3}) $ and $ \pi_2 :[l_{1},l_{2},l_{3}]\mapsto [l_{1},l_{2}^{u},l_{3}]$. Then,  $\pi_2$ is also a graph isomorphism.
\end{proof}

If $f_2,f_3$ are monomials, the graph $\Gamma_{\mathbb{F}}(f_2,f_3)$ is referred to as a monomial graph.
Now we show some simple results for a few monomial graphs.
\begin{lemma}\label{2}
	$ (i) $ The girth of $ \Gamma_{3}(\mathbb{F})=\Gamma_{\mathbb{F}}(xy,x^{2}y) $ is $ 8 $.
	\\$ (ii) $ The girth of $ \Gamma_{\mathbb{F}}(x^{3}y,x^{2}y) $ is 6 if $\mathbb{F}\not\in\{\mathbb{F}_2,\mathbb{F}_3,\mathbb{F}_5\}$.
	\\$ (iii) $ The girth of $ \Gamma_{\mathbb{F}}(xy,x^{2}y^{3}) $ is 6 if $\mathbb{F}\neq \mathbb{F}_3$ and the characterstic of $\mathbb{F}$ is not equal to 2.
	\\$ (iv) $ $\Gamma_{{5}}(x^{3}y,x^{2}y)\cong \Gamma_{3}(\mathbb{F}_{5})$.
	\\$ (v) $ $\Gamma_{{3}}(x^{3}y,x^{2}y)\cong \Gamma_{{3}}(xy,x^{2}y^{3})\cong \Gamma_{{3}}(\mathbb{F}_3)$.
\end{lemma}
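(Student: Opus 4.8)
The plan is to attack all five statements via the $2k$-$\ast$cycle criterion recorded just before Lemma~\ref{iso}: a point $S=(a_1,\dots,a_k;r_1,\dots,r_k)\in\mathbb{F}^{2k}$ is a $2k$-$\ast$cycle of $\Gamma_{\mathbb{F}}(f_2,f_3)$ exactly when $a_i\neq a_{i+1}$, $r_i\neq r_{i+1}$ for all $i$ and $\Delta_k(f_2)(S)=\Delta_k(f_3)(S)=0$; I will use freely that a bipartite graph has no odd $\ast$cycle, that every $6$-$\ast$cycle is a $6$-cycle, and that every $8$-$\ast$cycle is an $8$-cycle once there is no $4$-cycle. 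For (i), with $f_2=xy$, $f_3=x^2y$, a $4$-$\ast$cycle would give $\Delta_2(xy)(S)=(a_1-a_2)(r_1-r_2)=0$, which is impossible, so $\Gamma_3(\mathbb{F})$ has no $4$-cycle. For a $6$-$\ast$cycle $S$ one first notes $a_1,a_2,a_3$ are pairwise distinct ($a_1=a_3$ would reduce $\Delta_3(xy)(S)=0$ to $(a_1-a_2)(r_1-r_2)=0$); then $\Delta_3(xy)(S)=\Delta_3(x^2y)(S)=0$ is a homogeneous linear system in $(r_1,r_2,r_3)$ whose coefficient rows $(a_1-a_2,\,a_2-a_3,\,a_3-a_1)$ and $((a_1-a_2)(a_1+a_2),\,(a_2-a_3)(a_2+a_3),\,(a_3-a_1)(a_3+a_1))$ are linearly independent (a dependence would force $a_1+a_2=a_2+a_3=a_3+a_1$, i.e.\ $a_1=a_3$), so the solution space is $\langle(1,1,1)\rangle$ and $r_1=r_2=r_3$ contradicts $r_1\neq r_2$; hence there is no $6$-cycle. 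Finally $S=(0,1,0,1;0,1,0,1)$ when $\mathrm{char}\,\mathbb{F}=2$ and $S=(0,1,0,1;0,1,2,1)$ otherwise are $8$-$\ast$cycles (check $\Delta_4(xy)(S)=\Delta_4(x^2y)(S)=0$ and the inequalities), hence $8$-cycles, so the girth of $\Gamma_3(\mathbb{F})$ equals $8$.

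For (ii) and (iii) the non-existence of $4$-cycles is analogous: for $\Gamma_{\mathbb{F}}(x^3y,x^2y)$, $\Delta_2(x^3y)(S)=\Delta_2(x^2y)(S)=0$ with $r_1\neq r_2$ forces $a_1^3=a_2^3$ and $a_1^2=a_2^2$, hence $a_1=a_2$; for $\Gamma_{\mathbb{F}}(xy,x^2y^3)$ already $\Delta_2(xy)(S)=(a_1-a_2)(r_1-r_2)\neq 0$. So I only need to exhibit a $6$-$\ast$cycle under the stated hypotheses. For (ii), $\Delta_3(x^3y)(S)=\Delta_3(x^2y)(S)=0$ is again linear in $(r_1,r_2,r_3)$ with a $2\times 3$ matrix $M$ whose rows sum to zero; expanding and factoring a $2\times 2$ minor (it equals $-(a_1-a_2)(a_2-a_3)(a_3-a_1)(a_1a_2+a_2a_3+a_3a_1)$) shows $\mathrm{rank}\,M\le 1$ exactly when $a_1a_2+a_2a_3+a_3a_1=0$, and then $\ker M$ is a plane through $(1,1,1)$ which, by a short point-count, contains a triple with pairwise distinct entries once $|\mathbb{F}|\ge 3$. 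The triple $(a_1,a_2,a_3)=(1,\,t,\,-t/(1+t))$ satisfies $a_1a_2+a_2a_3+a_3a_1=0$ and is admissible (entries pairwise distinct, $t\neq -1$) for every $t$ outside a fixed set of at most five values, and such $t$ exists precisely for $\mathbb{F}\notin\{\mathbb{F}_2,\mathbb{F}_3,\mathbb{F}_5\}$, while a finite check shows no admissible triple exists over $\mathbb{F}_2,\mathbb{F}_3,\mathbb{F}_5$. For (iii), putting $r_1=1$, $r_2=\lambda$, $r_3=0$ turns $\Delta_3(xy)(S)=\Delta_3(x^2y^3)(S)=0$ into two linear equations with solution $(a_1,a_2,a_3)=(-1-2\lambda,\,1,\,-(\lambda+2)/\lambda)$; the $a_i$ and the $r_i$ are then pairwise distinct iff $\lambda\notin\{0,1,-1\}$, which is possible exactly when $\mathrm{char}\,\mathbb{F}\neq 2$ and $\mathbb{F}\neq\mathbb{F}_3$. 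Either way we obtain a $6$-$\ast$cycle, hence a $6$-cycle, so the girth is $6$.

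For (v), over $\mathbb{F}_3$ the polynomials $x^3$ and $x$ (likewise $y^3$ and $y$) define the same function on $\mathbb{F}_3$, so $\Gamma_3(x^3y,x^2y)$ and $\Gamma_3(xy,x^2y^3)$ are literally the graph $\Gamma_3(xy,x^2y)=\Gamma_3(\mathbb{F}_3)$. For (iv), the map $(p_1,p_2,p_3)\mapsto(p_1^3,p_2,p_3)$ on $P$ together with the identity on $L$ is a vertex bijection (cubing permutes $\mathbb{F}_5$), and since $(p_1^3)^2l_1=p_1^6l_1=p_1^2l_1$ on $\mathbb{F}_5$ it sends the adjacency $p_2+l_2=p_1^3l_1,\ p_3+l_3=p_1^2l_1$ of $\Gamma_5(x^3y,x^2y)$ onto the adjacency of $\Gamma_5(xy,x^2y)=\Gamma_3(\mathbb{F}_5)$, hence is an isomorphism. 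Alternatively, (iv) follows from the settled monomial case of Conjecture~\ref{conj01} over odd $\mathbb{F}_q$, once one observes that $\Gamma_5(x^3y,x^2y)$ has girth $\ge 8$ by the argument above and girth $\le 8$ by the Moore bound.

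I expect the constructive halves of (ii) and (iii) to be the delicate point: isolating the exact algebraic condition on $(a_1,a_2,a_3)$ that permits a degenerate linear system in the $r_i$ (for (ii), the vanishing of the second elementary symmetric function), and then pinning down which small or characteristic-$2$ fields fail to contain an admissible triple. The non-existence arguments, the rank computation in (i), and the isomorphisms in (iv)--(v) are routine once set up.
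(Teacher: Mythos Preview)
Your argument is correct but differs from the paper's in parts (i)--(iii). For (i), the paper disposes of $6$-cycles in one line: assuming $\Delta_3(xy)(a,b,e;c,d,f)=0$, it computes $\Delta_3(x^2y)(a,b,e;c,d,f)=(a-b)(c-d)(e-b)\neq 0$ directly. You instead treat $\Delta_3(xy)=\Delta_3(x^2y)=0$ as a linear system in $(r_1,r_2,r_3)$ and show its coefficient matrix has rank $2$, forcing $r_1=r_2=r_3$; this is more structural but longer. (Incidentally, the pairwise distinctness of $a_1,a_2,a_3$ that you argue separately is already built into the $\ast$cycle inequalities $a_i\neq a_{i+1}$ for $k=3$.) For (ii) and (iii) the paper simply exhibits explicit one-parameter families of $6$-cycles, namely $(t,\,1-t,\,t(t-1);\,t^2(t-1)^2,\,t^2,\,(t-1)^2)$ and $(-t,\,t+2t^2,\,t+2;\,1,\,0,\,t)$ respectively, and reads off the excluded fields from the distinctness constraints on $t$. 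Your route in (ii)---identifying the rank-drop locus $a_1a_2+a_2a_3+a_3a_1=0$, parametrizing it, and then counting to locate admissible $(r_1,r_2,r_3)$---yields the same conclusion and explains \emph{why} the construction works, at the cost of several extra steps; note also that the ``finite check'' over $\mathbb{F}_2,\mathbb{F}_3,\mathbb{F}_5$ is not needed for what the lemma actually asserts. In (iii), your explicit $6$-cycle $(-1-2\lambda,\,1,\,-(\lambda+2)/\lambda;\,1,\,\lambda,\,0)$ is valid, but $\Delta_3(x^2y^3)=0$ is quadratic in the $a_i$, not linear as you write---the solution still checks, so this is only a slip in the description. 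For (iv) and (v) your arguments agree with the paper's use of the cubing permutation of $\mathbb{F}_5$ and of $x^3\equiv x$ over $\mathbb{F}_3$.
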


\begin{proof}
	Let $(a,b)$ and $(c,d)$ be two pairs of distinct elements in $\mathbb{F}$. From $ \Delta_{2}(xy)(a,b;c,d)$ $=(a-b)(c-d)\neq 0 $, we see that $ \Gamma_{3}(\mathbb{F}) $ and $ \Gamma_{\mathbb{F}}(xy,x^{2}y^{3}) $ contain no 4-cycle and thus have girth at least 6.
	
	\indent $ (i) $ Let $ S_{0}=(a,b,e;c,d,f) $, where $ e\in \mathbb{F}\backslash\{ a,b \} $ and $ f\in \mathbb{F}\backslash\{ c,d \} $. If $ \Delta_{3}(xy)(S_{0})=(a-b)(c-d)-(a-e)(f-d)$ is equal to 0, then we have
\begin{align*}
\Delta_{3}(x^{2}y)(S_{0})=&(a^{2}-b^{2})(c-d)-(a^{2}-e^{2})(f-d)\\
=&(a-b)(c-d)(e-b)\neq 0.
\end{align*}
Hence, $ \Gamma_{3}(\mathbb{F}) $ contains no 6-cycle. Furthermore, $ \Gamma_{3}(\mathbb{F}) $ has girth 8 since it contains the 8-cycle $ (1,0,1,0;1,0,-1,0) $.
	
	\indent $ (ii) $ If $ \Delta_{2}(x^{2}y)(a,b;c,d)=(a^{2}-b^{2})(c-d)$ is equal to 0, then we have $ a^{2}=b^{2}\neq 0 $ and thus
	$$ \Delta_{2}(x^{3}y)(a,b;c,d)=(a^{3}-b^{3})(c-d)=a^{2}(a-b)(c-d)\neq 0. $$
	Hence, $ \Gamma_{\mathbb{F}}(x^{3}y,x^{2}y) $ has no 4-cycle and has girth at least 6. Furthermore, if there is some $t\in \mathbb{F}\setminus\{0,1,-1\}$ such that $0\not\in\{t-2,2t-1\}$, then
$$(t,1-t,t(t-1);t^{2}(t-1)^{2},t^{2},(t-1)^{2})$$ is a 6-cycle of $ \Gamma_{\mathbb{F}}(x^{3}y,x^{2}y) $.
Hence, the girth of $ \Gamma_{\mathbb{F}}(x^{3}y,x^{2}y) $ is equal to 6 if $\mathbb{F}\not\in\{\mathbb{F}_2,\mathbb{F}_3,\mathbb{F}_5\}$.

	\indent $ (iii) $ If $\mathbb{F}\neq \mathbb{F}_3$ and the characterstic of $\mathbb{F}$ is not equal to 2, for any $ t\in \mathbb{F}\backslash \{0,1,-1\}$, $$(-t,t+2t^{2},t+2;1,0,t)$$ is a 6-cycle of $ \Gamma_{\mathbb{F}}(xy,x^{2}y^{3}) $ and thus $\Gamma_{\mathbb{F}}(xy,x^{2}y^{3})$ has girth 6.
	
	\indent $ (iv) $ Since $ x^{3} $ is a permutation in $ \mathbb{F}_{5} $, from $ (x^{3})^{2}\equiv x^{2}(\bmod\,x^{5}-x) $ we have $$ \Gamma_{{5}}(x^{3}y,x^{2}y)\cong \Gamma_{{5}}(x^{3}y,(x^{3})^{2}y)\cong \Gamma_{{5}}(xy,x^{2}y)=\Gamma_3(\mathbb{F}_5).$$
	
	\indent $ (v) $ The desired proof follows simply from $ x^{3}\equiv x(\bmod\,x^{3}-x) $.
\end{proof}

To deal with the graph $\Gamma_{\mathbb{F}}(f_{2},f_{3})$ in general,
according to Lemma \ref{iso} one can assume, without loss of generality, that $f_{2}$ and $f_{3}$ consist of only mixed terms, i.e. $ f_{i}(x,0) $ and $ f_{i}(0,y) $ are zero polynomials for $i=2,3$. Hereafter, we assume that the bipartite graph $G=\Gamma_{{q^{M}}}(f(x)g(y),h(x,y))$ has girth at least 8, where $ f,g\in \mathbb{F}_{q}[x]_{m} $ are monic polynomials with $ f(0)=g(0)=0 $ and $ h(x,y)=\sum_{1\leq i,j\leq n} h_{i,j}x^{i}y^{j}\in \mathbb{F}_{q}[x,y]_n $ is a nonzero polynomial.

In the end of this section we show a lemma which is useful for the characterization of the polynomials $f,g,h$.
\begin{lemma}\label{7}
Suppose that $1\leq D\leq mn$, $1\leq N<q/2$ and $W\subseteq \mathbb{F}_{q}^{2}$ is a nonempty set such that, for any $(a,b)\in W$,
	\begin{equation}\label{eq7}
	  \min\{\lvert \{c\in \mathbb{F}_{q}:(a,c)\in W\}\rvert , \lvert\{d\in \mathbb{F}_{q}:(d,b)\in W\}\rvert\}>2N.
	\end{equation}
Let $\{e_{i}\}_{1\leq i\leq D}$ be a family of polynomials in $\mathbb{F}_{q}[x,y]_N$ such that, for any $ (a,b)\in W $, the $ t $-polynomial $ \sum_{1\leqslant i\leqslant D}e_{i}(a,b)t^{i}\in \mathbb{F}_{q}[t]_{D} $ has no root in $ \mathbb{F}_{q^{M}}^{*} $. Then, there is an integer $ s\in [1,D] $ such that $e_{s}(a,b)\neq 0$ for each $(a,b)\in W$ and $e_{i}$ is the zero polynomial for any $ i\neq s $.
\end{lemma}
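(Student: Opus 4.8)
The plan is to prove the statement in two stages. First, a pointwise reduction showing that at every $(a,b)\in W$ exactly one of $e_1(a,b),\dots,e_D(a,b)$ is nonzero; second, a globalization step that promotes this to the asserted conclusion about the polynomials $e_i$ themselves, using the abundance hypothesis (\ref{eq7}) together with the degree bound $N$.

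For the pointwise stage I would fix $(a,b)\in W$ and set $P_{a,b}(t)=\sum_{i=1}^{D}e_i(a,b)t^{i}\in\mathbb{F}_{q}[t]$. If all the $e_i(a,b)$ vanish, then $P_{a,b}$ is the zero polynomial, so every element of the nonempty set $\mathbb{F}_{q^{M}}^{*}$ is a root, contradicting the hypothesis. If at least two of the $e_i(a,b)$ are nonzero, write $P_{a,b}(t)=t^{s}R(t)$, where $t^{s}$ collects the lowest power of $t$ actually occurring; then $R\in\mathbb{F}_{q}[t]$ with $R(0)\neq 0$ and $1\le\deg R\le D-1\le mn-1$, so any root $\alpha$ of $R$ in $\overline{\mathbb{F}_{q}}$ is nonzero and satisfies $[\mathbb{F}_{q}(\alpha):\mathbb{F}_{q}]=e$ for some $e$ with $1\le e\le mn-1$. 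Since $M=\operatorname{lcm}(2,\dots,mn)$ is divisible by every such $e$, we get $\alpha\in\mathbb{F}_{q^{M}}^{*}$, and $\alpha$ is a root of $P_{a,b}$, again a contradiction. Hence exactly one $e_i(a,b)$ is nonzero, and $W$ decomposes as $W=\bigsqcup_{s=1}^{D}W_{s}$, where $W_{s}=\{(a,b)\in W:e_{s}(a,b)\neq 0\}$.

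For the globalization stage, choose $s$ with $W_{s}\neq\emptyset$ (one exists since $W\neq\emptyset$). For $(a,b)\in W_{s}$ the univariate polynomial $e_{s}(a,y)$ is nonzero of degree at most $N$, hence has at most $N$ zeros; so among the more than $2N$ values $c$ with $(a,c)\in W$ (by (\ref{eq7})), more than $N$ satisfy $e_{s}(a,c)\neq 0$, i.e.\ lie in $W_{s}$. By the pointwise dichotomy, $e_{i}(a,c)=0$ at all these $c$ for every $i\neq s$; since $\deg_{y}e_{i}\le N$, this forces $e_{i}(a,y)\equiv 0$. A symmetric count along the column through a fixed point of $W_{s}$ shows that the set $A_{s}$ of first coordinates occurring in points of $W_{s}$ has more than $N$ elements. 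Consequently, for each $i\neq s$, every coefficient of $e_{i}$ viewed as a polynomial in $y$ with coefficients in $\mathbb{F}_{q}[x]_{N}$ vanishes on $A_{s}$, a set larger than its degree, so $e_{i}\equiv 0$. Feeding this back into the pointwise dichotomy, the unique nonzero value among $e_{1}(a,b),\dots,e_{D}(a,b)$ must be $e_{s}(a,b)$ for every $(a,b)\in W$; hence $W=W_{s}$ and $e_{s}(a,b)\neq 0$ throughout $W$, as required.

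The delicate point is the pointwise stage — specifically, the observation that a polynomial $\sum_{i=1}^{D}c_{i}t^{i}\in\mathbb{F}_{q}[t]$ with $D\le mn$ having no root in $\mathbb{F}_{q^{M}}^{*}$ must be a single monomial $c_{s}t^{s}$. This is exactly where the hypotheses $D\le mn$ and $M=\operatorname{lcm}(2,\dots,mn)$ enter: they ensure that the ``tail'' $R$, whose degree is at most $mn-1$, splits completely over $\mathbb{F}_{q^{M}}$. Once this is established, the globalization stage is a routine Schwartz--Zippel-type argument, where (\ref{eq7}) and $N<q/2$ are used only to keep the relevant rows, columns, and coordinate projections larger than $N$.
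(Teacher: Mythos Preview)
Your proof is correct and follows essentially the same approach as the paper: both rely on the fact that a polynomial $\sum_{i=1}^{D} c_i t^i \in \mathbb{F}_q[t]$ with $D \leq mn$ having no root in $\mathbb{F}_{q^M}^*$ must be a single monomial (since any nontrivial cofactor of $t^s$ splits in $\mathbb{F}_{q^M}$), and then use the row/column abundance from~(\ref{eq7}) together with the degree bound $N$ to force $e_i\equiv 0$ for $i\neq s$. Your explicit separation into a pointwise stage and a globalization stage is a bit cleaner than the paper's compressed presentation, but the underlying argument is the same.
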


\begin{proof}
	Let $ s\in [1,D] $ be an integer such that $ f_{s}$ is not the zero polynomial. Let $e_s(x,y)=\sum_{0\leq i,j\leq N}w_{i,j}x^iy^j$.
If $e_s$ is always equal to 0 over $W$, then, for any $(a,b)\in W$, from $\lvert \{c\in \mathbb{F}_{q}:(a,c)\in W\}\rvert>2N$ we see $\sum_{0\leq i\leq N}w_{i,j}a^i=0$, $0\leq j\leq N$ and thus from $\lvert\{d\in \mathbb{F}_{q}:(d,b)\in W\}\rvert>2N$ we see $w_{i,j}=0$, $0\leq i,j\leq N$, contradicts to the assumption.
Hence, there is some pair
$(a,b)\in W$ such that $e_{s}(a,b)\neq 0$.
For such a pair $(a,b)$, let
	$$ A=\{d\in \mathbb{F}_{q}:(d,b)\in W,e_{s}(d,b)\neq 0\}, $$
	$$ B=\{c\in \mathbb{F}_{q}:(a,c)\in W,e_{s}(a,c)\neq 0\}. $$
	From (\ref{eq7}) and that the polynomials $e_{s}(a,y)\in \mathbb{F}_{q}[y]_N $ and $e_{s}(x,b)\in \mathbb{F}_{q}[x]_N $ have degree at most $ N $, we see
	\begin{equation}\label{eq8}
	  \min\{\lvert A\rvert ,\lvert B \lvert\}>N.
	\end{equation}
Since for any $ d\in A $ the $ t $-polynomial $ \sum_{1\leqslant i\leqslant D}e_{i}(d,b)t^{i}\in \mathbb{F}_{q}[t]_{D} $ can be decomposed completely in $\mathbb{F}_{q^{M}}$ and has no nonzero root, we have $e_{i}(d,b)=0 $ for any $ i\neq s $. Hence, from (\ref{eq8}) the $ x $-polynomial $e_{i}(x,b)\in \mathbb{F}_{q}[x]_{N} $ is the zero polynomial for any $i\neq s $. Similarly, one can conclude that the $ y $-polynomial $e_{i}(a,y)\in \mathbb{F}_{q}[y]_{N} $ is the zero polynomial for any $ i\neq s $. Therefore, from (\ref{eq8}) we see that the polynomial $e_{i}(x,y)\in \mathbb{F}_{q}[x,y] $ is the zero polynomial for any $ i\neq s $. Clearly, we have $e_{s}(a,b)\neq 0 $ for any $ (a,b)\in W $.
\end{proof}

\section{Characterization of $f,g$}

In this section, we consider to characterize the univariate polynomials $f,g$.

\begin{lemma}\label{3}
$ (i) $ If $ a,b\in \mathbb{F}_{q^{M}} $ are distinct with $ f(a)=f(b) $, then the $y$-polynomial
$$ \theta_{a,b}(y)=h(a,y)-h(b,y)\in \mathbb{F}_{q^{M}}[y]  $$
is injective in $ \mathbb{F}_{q^{M}} $.
\\ $ (ii) $ If $ c,d\in \mathbb{F}_{q^{M}} $ are distinct with $ g(c)=g(d) $, then the $x$-polynomial
$$ \phi_{c,d}(x)=h(x,c)-h(x,d)\in \mathbb{F}_{q^{M}}[x]  $$
is injective in $ \mathbb{F}_{q^{M}} $.
\end{lemma}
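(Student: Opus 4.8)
The plan is to prove both parts simultaneously by a symmetry argument: part (ii) follows from part (i) via the isomorphism $\Gamma_{\mathbb{F}}(f_2,f_3)\cong\Gamma_{\mathbb{F}}(\bar f_2,\bar f_3)$ of Lemma~\ref{iso}(ii), since swapping the roles of $x$ and $y$ turns the product $f(x)g(y)$ into $g(x)f(y)$, turns $h(x,y)$ into $h(y,x)$, and interchanges the roles of the two univariate polynomials; so it suffices to establish (i).

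For (i), suppose toward a contradiction that $\theta_{a,b}$ is not injective on $\mathbb{F}_{q^M}$, i.e.\ there exist distinct $c,d\in\mathbb{F}_{q^M}$ with $\theta_{a,b}(c)=\theta_{a,b}(d)$. I will exhibit a $6$-$\ast$cycle (which in a bipartite graph with no $4$-cycle is a genuine $6$-cycle, contradicting girth $\geq 8$) built from the data $a,b,c,d$. Recall from Section~2 that $S=(a_1,a_2,a_3;r_1,r_2,r_3)$ is a $6$-$\ast$cycle of $G$ precisely when $\Delta_3(f(x)g(y))(S)=\Delta_3(h)(S)=0$ and $a_i\neq a_{i+1}$, $r_i\neq r_{i+1}$. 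The natural candidate uses only two distinct $p_1$-coordinates and two distinct $l_1$-coordinates, arranged so that the degenerate third vertex reuses one of them; concretely I expect to try $S=(a,b,a;c,d,c)$ or a close variant. The condition $\Delta_3$ of the product polynomial $f(x)g(y)$ vanishing will reduce — using $f(a)=f(b)$ — to an identity that holds automatically for such a configuration, because $f(x)g(y)$ evaluated along these points telescopes: the $f$-values at the two distinct $x$-coordinates agree, so the alternating sum of $f(x_i)g(y_i)$ collapses. Simultaneously, $\Delta_3(h)(S)=0$ unwinds to exactly the equation $\theta_{a,b}(c)=\theta_{a,b}(d)$ (possibly after also using $h(a,y)-h(b,y)$ at a repeated argument), which is our hypothesis. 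The remaining nondegeneracy $a_i\neq a_{i+1}$, $r_i\neq r_{i+1}$ holds because $a\neq b$ and $c\neq d$.

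The main obstacle is getting the combinatorial bookkeeping of the $6$-$\ast$cycle exactly right: one must choose the multiset of $p_1$- and $l_1$-coordinates and their cyclic order so that \emph{both} $\Delta_3(f\cdot g)$ and $\Delta_3(h)$ vanish, while the $\ast$cycle conditions $a_i\neq a_{i+1}$, $r_i\neq r_{i+1}$ are all satisfied — and then confirm it is not merely a concatenation of two $4$-cycles (which is automatic here since $G$ has no $4$-cycle, because $\Delta_2(f(x)g(y))(a,b;c,d)=(f(a)-f(b))(g(c)-g(d))$ is generically nonzero but one should double-check that $G$ having girth $\geq 8$ already forbids $4$-cycles, which it does by definition of girth). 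Once the right configuration is pinned down, the verification that the two $\Delta_3$'s vanish is a routine expansion using $f(a)=f(b)$ and the assumed coincidence $\theta_{a,b}(c)=\theta_{a,b}(d)$; I would present that expansion compactly rather than term-by-term.
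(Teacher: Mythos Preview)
Your approach has a genuine gap: the candidate $S=(a,b,a;c,d,c)$ is \emph{not} a $6$-$\ast$cycle. The defining conditions in Section~2 require $a_i\neq a_{i+1}$ for $i=1,2,3$ with $a_4=a_1$; in particular $a_3\neq a_1$, which fails here since $a_3=a_1=a$. For $k=3$ these three inequalities force $a_1,a_2,a_3$ to be pairwise distinct (and likewise $r_1,r_2,r_3$), so no ``close variant'' using only the two values $a,b$ on the point side and $c,d$ on the line side can possibly satisfy the $\ast$cycle conditions. Since the hypothesis gives you only $f(a)=f(b)$ and no third element $e$ with $f(e)=f(a)$, there is no evident way to fill the missing slot, and the $6$-cycle strategy stalls.

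The paper's argument is shorter and sidesteps this entirely by using $k=2$ rather than $k=3$. For $S_1=(a,b;c,d)$ with $c\neq d$ one has
\[
\Delta_2\bigl(f(x)g(y)\bigr)(S_1)=(f(a)-f(b))(g(c)-g(d))=0
\]
since $f(a)=f(b)$, while
\[
\Delta_2(h)(S_1)=h(a,c)-h(b,c)+h(b,d)-h(a,d)=\theta_{a,b}(c)-\theta_{a,b}(d).
\]
Because $G$ has girth at least $8$ it contains no $4$-cycle, hence no $4$-$\ast$cycle, so the second quantity must be nonzero for every $c\neq d$; that is exactly the injectivity of $\theta_{a,b}$. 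Part~(ii) follows by the symmetric computation, as you observed.
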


\begin{proof}
Since $ (ii) $ is symmetrical to $ (i) $, we only give proof for $ (i) $.

Suppose $ a,b\in \mathbb{F}_{q^{M}} $ are distinct with $f(a)=f(b)$. Let $ S_{1}=(a,b;c,d) $. Since $ G $ has no 4-cycle, from $ \Delta_{2}(f(x)g(y))(S_{1})=0 $ we see
	$$ \Delta_{2}(h(x,y))(S_{1})=\theta_{a,b}(c)-\theta_{a,b}(d)\neq 0 $$
	for any $ c,d \in \mathbb{F}_{q^{M}} $ with $ c\neq d $, and thus $ \theta_{a,b}(y) $ must be injective in $ \mathbb{F}_{q^{M}} $.
\end{proof}

\begin{lemma}\label{4}
	At least one of the polynomials $ f, g$ is injective in $ \mathbb{F}_{q^{M}} $.
\end{lemma}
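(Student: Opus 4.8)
The plan is to argue by contradiction: suppose neither $f$ nor $g$ is injective on $\mathbb{F}_{q^M}$. Then there exist distinct $a,b\in\mathbb{F}_{q^M}$ with $f(a)=f(b)$ and distinct $c,d\in\mathbb{F}_{q^M}$ with $g(c)=g(d)$. I would combine these two collisions with a free choice of two more coordinates to build a $6$-$\ast$cycle of $G$, contradicting the girth hypothesis. Concretely, consider $S=(a,b,e;c,d,w)\in\mathbb{F}_{q^M}^6$ where $e\in\mathbb{F}_{q^M}\setminus\{a,b\}$ and $w\in\mathbb{F}_{q^M}\setminus\{c,d\}$ are still to be chosen. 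By the $2k$-$\ast$cycle criterion in Section~2, $S$ is a $6$-$\ast$cycle of $G$ precisely when $\Delta_3(f(x)g(y))(S)=\Delta_3(h)(S)=0$ together with the non-degeneracy conditions $a\neq b$, $b\neq e$, $e\neq a$, $c\neq d$, $d\neq w$, $w\neq c$, all of which are arranged by the choices above.

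The key computation is that the first equation $\Delta_3(f(x)g(y))(S)=0$ can be forced to hold identically. Writing out $\Delta_3$ for the product $f(x)g(y)$ and using $f(a)=f(b)$ and $g(c)=g(d)$, the terms should collapse: the contribution of the pair $(a,b)$ with the pair $(c,d)$ cancels because $f(a)-f(b)=0$, while the contribution involving $e$ and $w$ telescopes. After this cancellation the condition $\Delta_3(f(x)g(y))(S)=0$ becomes either automatically satisfied or reduces to a single linear constraint relating $w$ to the other data, which can be solved for $w$ in $\mathbb{F}_{q^M}$ while keeping $w\notin\{c,d\}$ (possible since $q^M$ is large). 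Having fixed $f,g$-side to vanish, the second equation $\Delta_3(h)(S)=0$ is then a single polynomial condition; by Lemma~\ref{3}, for the collision pair $(a,b)$ the polynomial $\theta_{a,b}(y)=h(a,y)-h(b,y)$ is injective, and symmetrically $\phi_{c,d}(x)=h(x,c)-h(x,d)$ is injective, which is exactly the obstruction preventing the remaining $h$-equation from being solvable — so I will instead choose the free parameters to make the $h$-side vanish directly and check the $f,g$-side survives, whichever ordering is cleaner.

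The main obstacle I anticipate is organizing the bookkeeping so that the two collisions $(a,b)$ and $(c,d)$ can be used \emph{simultaneously} inside one $6$-$\ast$cycle without the two injectivity statements of Lemma~\ref{3} blocking the construction — Lemma~\ref{3} is precisely the statement that a single collision in one variable forces injectivity of the corresponding difference polynomial in the other, so the contradiction must exploit the interaction of collisions in \emph{both} variables at once. I expect the clean route is: assume $f$ is non-injective, so by Lemma~\ref{3}(i) $\theta_{a,b}$ is injective; separately, if $g$ were also non-injective, Lemma~\ref{3}(ii) gives $\phi_{c,d}$ injective; now build the $6$-$\ast$cycle on $S=(a,b,e;c,d,w)$, solve $\Delta_3(f g)(S)=0$ for one free coordinate using the product structure, and then solve $\Delta_3(h)(S)=0$ for the other, where solvability of the latter follows from the fact that $\Delta_3(h)(S)$, viewed as a function of the remaining free coordinate, is a nonconstant polynomial of bounded degree (at most $n$) and hence has a root in the large field $\mathbb{F}_{q^M}$ away from the finitely many forbidden values — the degree bound and the size hypothesis~(\ref{eq1}) on $q$ are what make this work. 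Once such a $6$-$\ast$cycle exists, since $G$ is bipartite with no $4$-cycle it is a genuine $6$-cycle, contradicting girth at least eight; hence at least one of $f,g$ is injective.
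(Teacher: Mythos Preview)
Your overall strategy matches the paper's: assume collisions $f(a)=f(b)$ and $g(c)=g(d)$, invoke Lemma~\ref{3} to get $\theta_{a,b}$ and $\phi_{c,d}$ bijective on $\mathbb{F}_{q^M}$, then construct a $6$-$\ast$cycle. The gap is in the choice of configuration. With your $S=(a,b,e;c,d,w)$ one computes
\[
\Delta_3(f(x)g(y))(S)=(f(a)-f(b))g(c)+(f(b)-f(e))g(d)+(f(e)-f(a))g(w)=(f(a)-f(e))(g(d)-g(w)),
\]
which uses only $f(a)=f(b)$ and \emph{not} $g(c)=g(d)$. This is not identically zero and does not ``reduce to a single linear constraint'': to kill it you would need either $f(e)=f(a)$ (forcing a triple collision for $f$, which you cannot assume) or $g(w)=g(d)$, whose known solutions are precisely the forbidden values $w=c,d$. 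So your free parameters cannot be chosen to make the $fg$-side vanish.

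The paper instead takes $S_2=(a,b,t';\,t,c,d)$, placing the $g$-collision pair $(c,d)$ in the last two $y$-slots and the free $y$-variable first. Then
\[
\Delta_3(f(x)g(y))(S_2)=(f(a)-f(b))g(t)+(f(a)-f(t'))(g(c)-g(d))=0
\]
identically, using both $f(a)=f(b)$ and $g(c)=g(d)$. With this placement
\[
\Delta_3(h)(S_2)=h(b,c)-h(a,d)+\theta_{a,b}(t)-\phi_{c,d}(t'),
\]
and the mechanism for solving it is not that a nonconstant polynomial has a root, but that $\phi_{c,d}$ is a \emph{bijection} of $\mathbb{F}_{q^M}$ (injective by Lemma~\ref{3}, hence surjective on a finite field): for every $t$ there is a unique $t'$. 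One then checks that $t=c$ and $t=d$ are exactly the values sending $h(b,c)-h(a,d)+\theta_{a,b}(t)$ to $\phi_{c,d}(a)$ and $\phi_{c,d}(b)$; injectivity of $\theta_{a,b}$ guarantees these are the only such $t$, so any $t\notin\{c,d\}$ yields $t'\notin\{a,b\}$ and hence a $6$-cycle. Rearranging your free variables to match $S_2$ and replacing the ``has a root'' heuristic by this bijectivity argument will complete your proof.
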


\begin{proof}
Suppose that neither $ f$ nor $ g$ is injective in $ \mathbb{F}_{q^{M}} $. Let $ a,b,c,d $ be elements in $ \mathbb{F}_{q^{M}}$ with $a\neq b$, $c\neq d$
such that $f(a)=f(b)$, $ g(c)=g(d) $. According to Lemma \ref{3}, the polynomials $ \theta_{a,b},\phi_{c,d} \in\mathbb{F}_{q^{M}}[x] $ are injective in $ \mathbb{F}_{q^{M}} $. For $ S_{2}=(a,b,t';t,c,d) $, we have
$\Delta_{3}(f(x)g(y))(S_{2})=0$
and
$$ \Delta_{3}(h(x,y))(S_{2})=h(b,c)-h(a,d)+\theta_{a,b}(t)-\phi_{c,d}(t').$$
Since $ \theta_{a,b}$ is injective in $ \mathbb{F}_{q^{M}} $, for any $ t\in \mathbb{F}_{q^{M}}\backslash \{c,d\} $ we have
$$ h(b,c)-h(a,d)+\theta_{a,b}(t)\notin \{\phi_{c,d}(a),\phi_{c,d}(b)\}.$$
Therefore, from that $\phi_{c,d}$ is injective in $\mathbb{F}_{q^{M}}$, for $t\in \mathbb{F}_{q^{M}}\backslash \{c,d\}$ there exists some $t'\in \mathbb{F}_{q^{M}}\backslash\{a,b\}$ such that $ \Delta_{3}(h(x,y))(S_{2})=0 $ and thus $ G $ has a 6-cycle of form $ S_{2} $, contradicts to the assumption.
\end{proof}
\begin{lemma}\label{5}
There are no distinct $x_{0},x_{1},x_{2}\in \mathbb{F}_{q^{M}}$ satisfying $f(x_{0})=f(x_{1})=f(x_{2})$ or $ g(x_{0})=g(x_{1})=g(x_{2})$.
\end{lemma}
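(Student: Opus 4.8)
The plan is to obtain a contradiction by producing a $6$-cycle in $G$. By Lemma~\ref{iso}(ii) the polynomials $f$ and $g$ play symmetric roles (swapping $x$ and $y$ turns $f(x)g(y)$ into $g(x)f(y)$ and interchanges Lemma~\ref{3}(i) with Lemma~\ref{3}(ii)), so it suffices to rule out the existence of distinct $x_{0},x_{1},x_{2}\in\mathbb{F}_{q^{M}}$ with $f(x_{0})=f(x_{1})=f(x_{2})$. Suppose such elements exist. Applying Lemma~\ref{3}(i) to each of the pairs $\{x_{0},x_{1}\}$, $\{x_{1},x_{2}\}$ and $\{x_{0},x_{2}\}$, the three $y$-polynomials $\theta_{x_{0},x_{1}}$, $\theta_{x_{1},x_{2}}$, $\theta_{x_{0},x_{2}}\in\mathbb{F}_{q^{M}}[y]$ are each injective on $\mathbb{F}_{q^{M}}$, hence bijections of $\mathbb{F}_{q^{M}}$; moreover they obey the telescoping identity $\theta_{x_{0},x_{2}}=\theta_{x_{0},x_{1}}+\theta_{x_{1},x_{2}}$.

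Next I build a candidate $6$-$\ast$cycle $S=(x_{0},x_{1},x_{2};r_{1},r_{2},r_{3})$. Since $f$ is constant on $\{x_{0},x_{1},x_{2}\}$, every difference $f(x_{i})-f(x_{i+1})$ vanishes, so $\Delta_{3}(f(x)g(y))(S)=0$ holds automatically for every choice of $r_{1},r_{2},r_{3}$. A short computation gives $\Delta_{3}(h)(S)=\theta_{x_{0},x_{1}}(r_{1})+\theta_{x_{1},x_{2}}(r_{2})-\theta_{x_{0},x_{2}}(r_{3})$. Choosing $r_{1}\in\mathbb{F}_{q^{M}}$ and $r_{2}\in\mathbb{F}_{q^{M}}\setminus\{r_{1}\}$ arbitrarily (possible since $q^{M}\geq 3$) and setting $r_{3}=\theta_{x_{0},x_{2}}^{-1}(\theta_{x_{0},x_{1}}(r_{1})+\theta_{x_{1},x_{2}}(r_{2}))$ makes $\Delta_{3}(h)(S)=0$ as well. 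The distinctness of $x_{0},x_{1},x_{2}$ gives the condition $a_{i}\neq a_{i+1}$, and $r_{1}\neq r_{2}$ by choice, so it only remains to check $r_{2}\neq r_{3}$ and $r_{3}\neq r_{1}$.

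Verifying these last two non-degeneracy conditions is the only point requiring care. Using $\theta_{x_{0},x_{2}}=\theta_{x_{0},x_{1}}+\theta_{x_{1},x_{2}}$, an equality $r_{3}=r_{2}$ would force $\theta_{x_{0},x_{1}}(r_{1})=\theta_{x_{0},x_{1}}(r_{2})$, while $r_{3}=r_{1}$ would force $\theta_{x_{1},x_{2}}(r_{1})=\theta_{x_{1},x_{2}}(r_{2})$; in either case injectivity yields $r_{1}=r_{2}$, contradicting our choice. Hence $S$ is a genuine $6$-$\ast$cycle of $G$, and since $G$ is bipartite with no $4$-cycle it is in fact a $6$-cycle, contradicting that $G$ has girth at least $8$. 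The corresponding statement for $g$ follows by the symmetry noted above, completing the proof. The substantive step is thus the verification of the degeneracy conditions on $r_{1},r_{2},r_{3}$; everything else is forced directly by Lemmas~\ref{iso} and~\ref{3}.
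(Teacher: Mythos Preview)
Your proof is correct and follows essentially the same approach as the paper's: both assume $f(x_0)=f(x_1)=f(x_2)$, use Lemma~\ref{3} to get injectivity of the $\theta$'s, exploit the telescoping relation $\theta_{x_0,x_1}+\theta_{x_1,x_2}+\theta_{x_2,x_0}=0$, and produce a $6$-cycle of the form $(x_0,x_1,x_2;r_1,r_2,r_3)$. Your write-up is in fact more careful than the paper's, which simply asserts the existence of distinct $y_0,y_1,y_2$ without spelling out the verification you give; the only minor remark is that in a bipartite graph every $6$-$\ast$cycle is already a $6$-cycle, so the appeal to ``no $4$-cycle'' is not needed.
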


\begin{proof}
	Assume in contrast that distinct $ x_{0},x_{1},x_{2}\in \mathbb{F}_{q^{M}} $ satisfy $ f(x_{0})=f(x_{1})=f(x_{2}) $.
For $ S_{3}=(x_{0},x_{1},x_{2};y_{0},y_{1},y_{2}) $, we have $ \Delta_{3}(f(x)g(y))(S_{3})=0 $ and
	$$ \Delta_{3}(h(x,y))(S_{3})=\theta_{x_{0},x_{1}}(y_{0})+\theta_{x_{1},x_{2}}(y_{1})+\theta_{x_{2},x_{0}}(y_{2}),$$
where the polynomials $\theta_{x_{0},x_{1}},\theta_{x_{1},x_{2}},\theta_{x_{2},x_{0}}$ are injective in $\mathbb{F}_{q^{M}}$ according to Lemma~\ref{3}.
Therefore, from $\theta_{x_{0},x_{1}}+\theta_{x_{1},x_{2}}+\theta_{x_{2},x_{0}}=0$ there are distinct $ y_{0},y_{1},y_{2}\in \mathbb{F}_{q^{M}} $ satisfying $ \Delta_{3}(h(x,y))(S_{3})=0 $, and thus $ G $ contains a 6-cycle of form $ S_{3} $, contradicts to the assumption.\\
	\indent Similarly, one can show that there are no distinct $ x_{0},x_{1},x_{2}\in \mathbb{F}_{q^{M}} $ satisfying $ g(x_{0})=g(x_{1})=g(x_{2}) $.
\end{proof}

\begin{lemma}\label{6}
	Let $D\in[1,mn]$ and $ T(x)\in \mathbb{F}_{q}[x]_{D} $ be a monic polynomial with $ T(0)=0 $.\\
	\noindent $(i)$ If $ T(x) $ is injective in $ \mathbb{F}_{q^{M}} $, then there is some $ u\in K_{p} $ such that	
	\begin{equation}\label{eq2}
      T(x)=x^{u}.
	\end{equation}	
	\noindent $(ii)$ If $ T(x) $ is not injective in $ \mathbb{F}_{q^{M}} $ and there are no distinct $ x_{0},x_{1},x_{2}\in \mathbb{F}_{q^{M}} $ with $ T(x_{0})=T(x_{1})=T(x_{2}) $, then there are some $ v\in K_{p} $ and $ a\in \mathbb{F}_{q} $ such that
	\begin{equation}\label{eq3}
	  T(x)=\rho_a^v(x).
	\end{equation}	
\end{lemma}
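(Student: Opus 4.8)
The plan is to reduce both parts to counting preimages of polynomial maps over the finite fields $\mathbb{F}_{q^r}$ for $r$ dividing $M$, using the explicit bound \eqref{eq1} on $q$ to force the degree of $T$ to be small and to rule out all but the asserted shapes. Throughout, write $D = \deg T$ and recall $T$ is monic with $T(0)=0$, so $T(x) = x\,T_1(x)$ for some monic $T_1$ of degree $D-1$.

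For part $(i)$: if $T$ is injective on $\mathbb{F}_{q^M}$, then in particular it is a bijection of $\mathbb{F}_{q^M}$, i.e.\ a permutation polynomial of that field. I would first argue that $T$ must in fact be a permutation polynomial of $\mathbb{F}_q$ as well (and of every intermediate field): since $D \le mn < q/2$ by \eqref{eq1} and $D \le q^M$, a classical argument — e.g.\ via the Hermite–Dickson criterion, or directly from the fact that a non-bijective map on a finite field with $D$ small must collapse a ``positive fraction'' controlled by $D$, which is incompatible with injectivity on the larger field whose size is a power of $q$ — shows the permutation property descends. Then I would invoke the structure of permutation polynomials of degree $< q$ with $T(0) = 0$: the only monic ones are the normalized monomials $x^u$ with $\gcd(u, q^M - 1)$ forcing $u \in K_p$ (a power of the characteristic $p$), because degree $< q$ kills the possibility of the more exotic permutation polynomials. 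Concretely: the key computation is that the map $x \mapsto x^D$ being a permutation of $\mathbb{F}_{q^M}$ is equivalent to $\gcd(D, q^M-1) = 1$, and combined with $D < q$ this pins down $T$ to $x^u$ after one checks the lower-order terms must vanish (a Newton's-identities / power-sum argument: if $T \ne x^D$ then some power sum $\sum_{c} T(c)^k$ over $\mathbb{F}_{q^r}$ is nonzero for some $k \le D < q$, contradicting bijectivity).

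For part $(ii)$: now $T$ is not injective on $\mathbb{F}_{q^M}$ but every value is attained at most twice. The plan is to count: $\sum_{a \in \mathbb{F}_{q^M}} (\#T^{-1}(a) - 1)^2$, or equivalently the number of ordered pairs $(x,x')$ with $x \ne x'$ and $T(x) = T(x')$, which is the number of $\mathbb{F}_{q^M}$-points on the affine curve $\frac{T(x)-T(x')}{x - x'} = 0$. Since every fiber has size $\le 2$, this count equals exactly $2 \cdot \#\{a : \#T^{-1}(a) = 2\}$, and it is at least $q^M - D$ or so (most values are attained, since $T$ has degree $D < q^M$). On the other hand this curve has degree $D - 1$, so by the (trivial) bound on the number of points on a plane curve over $\mathbb{F}_{q^M}$ together with the two-to-one structure, I would extract that $T(x) - T(x')$ factors as $(x - x')$ times something whose zero set is a graph of an involution $x \mapsto \sigma(x)$. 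The fixed-point-free involution $\sigma$ on $\mathbb{F}_{q^M}$ with $T \circ \sigma = T$ must be of the form $\sigma(x) = a - x$ (after checking $\sigma$ is a polynomial map of low degree, degree $1$, using that it commutes with $T$ and the degree bound), whence $T(x)$ is a polynomial in $x(x-a) = \rho_a(x)$. Writing $T(x) = \tilde T(\rho_a(x))$ with $\tilde T$ monic of degree $D/2 < q/2$, one checks $\tilde T$ must itself be \emph{injective} on the relevant field (else $T$ would take some value more than twice), and then part $(i)$ applied to $\tilde T$ gives $\tilde T(x) = x^v$, i.e.\ $T(x) = \rho_a^v(x)$ with $v \in K_p$.

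The main obstacle I expect is not the qualitative structure but making the descent and the degree bookkeeping airtight with the precise hypothesis \eqref{eq1}: one must be careful that the relevant power sums / point counts are taken over a field large enough to detect the degree-$D$ behavior yet that $D = \deg T \le mn$ stays below the various thresholds $2mn+3$, $mn+3n+1$, $n(n+1)+2$ — here only $D \le mn < q$ and $D/2 < q/2$ are really needed, so part of the work is confirming which inequality from \eqref{eq1} is being used and that no case of an ``exotic'' permutation or two-to-one polynomial of degree $< q$ sneaks through. A secondary subtlety is verifying that $\sigma$, a priori just a set-theoretic involution of $\mathbb{F}_{q^M}$, is genuinely given by a linear polynomial; this should follow from Lagrange interpolation plus the constraint that $\deg \sigma \le D - 1$ and that $\sigma$ is an involution fixing no point, forcing $\deg \sigma = 1$.
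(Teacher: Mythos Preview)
Your proposal overlooks the single observation that drives the paper's proof: the integer $M=\mathrm{lcm}(2,3,\ldots,mn)$ is chosen precisely so that \emph{every} polynomial in $\mathbb{F}_q[x]$ of degree at most $mn$ splits completely over $\mathbb{F}_{q^M}$. Once you use this, both parts become short and elementary, and your Hermite--Dickson / power-sum machinery and the involution $\sigma$ are unnecessary.

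For part~(i): since $T$ splits in $\mathbb{F}_{q^M}$ and $T(0)=0$, any nonzero root $\alpha$ would give $T(\alpha)=T(0)$, contradicting injectivity. Hence $T(x)=x^d$. Writing $d=ku$ with $u\in K_p$ maximal and $\gcd(k,p)=1$, the polynomial $x^k-1$ (degree $\le mn$) also splits in $\mathbb{F}_{q^M}$ into $k$ distinct roots; any nontrivial $k$-th root of unity $\zeta$ gives $T(\zeta x)=T(x)$, forcing $k=1$. Your claim that ``the only monic permutation polynomials of degree $<q$ with $T(0)=0$ are monomials $x^u$'' is not a standard fact and your sketch does not justify it; it happens to be true here \emph{because} of the splitting property, which is exactly what you omit.

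For part~(ii): the paper again uses splitting. Since $T(0)=0$ and fibers have size $\le 2$, $T$ has at most one nonzero root in $\mathbb{F}_{q^M}$; hence either $T(x)=x^d$ (handled as above, now with $k=2$) or $T(x)=x^r(x-a)^s$. After stripping the common $p$-power from $r,s$ one gets $R(x)=x^k(x-a)^l$. For each $b\in\mathbb{F}_q\setminus\{0,a\}$ the polynomial $R(x)-R(b)$ again splits in $\mathbb{F}_{q^M}$ with at most two roots, so $R(x)-R(b)=(x-b)(x-\alpha(b))^{k+l-1}$; computing $R'(b)\ne 0$ shows $\alpha(b)=ka/(k+l)$ is constant, and since this identity holds for more than $k+l$ values of $b$ (here is where \eqref{eq1} enters, via $q-3>mn$), it becomes a polynomial identity in $b$, which is absurd unless $k=l=1$. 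Your route through an involution $\sigma$ with $T\circ\sigma=T$ has a real gap: you assert $\sigma$ is a polynomial of degree $\le D-1$ and then that being a fixed-point-free involution forces degree~$1$, but neither step is justified. A priori $\sigma$ is only a set-map; writing $T(x)-T(y)=(x-y)Q(x,y)$, the equation $Q(x,y)=0$ in $y$ has degree $D-1$ and need not define $y$ as a polynomial in $x$, and involutions of $\mathbb{F}_{q^M}$ certainly need not be linear (e.g.\ Frobenius-type maps). The paper bypasses this entirely by working with the factorization of $T(x)-T(b)$ for \emph{scalar} $b$ rather than trying to parametrize the second preimage.
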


\begin{proof}
	From $D\in[1, mn]$ we see any polynomial in $\mathbb{F}_{q}[x]_{D}$ can be completely decomposed in $ \mathbb{F}_{q^{M}}^{*} $.

Assume first that the polynomial $T(x)\in\mathbb{F}_{q}[x]_{D} $ has no root in $ \mathbb{F}_{q^{M}}^{*} $. Clearly, we have $ T(x)=x^{d} $, where $d$ is the degree of $T(x)$. Let $ u $ be the largest integer in $ K_{p} $ with $u\mid d$. Then the integer $k=d/u\in[1,D]$ is coprime to $p$. We note that $ x^{k}-1 $ has no repeated roots in $ \mathbb{F}_{q^{M}}^{*} $ since its derivative $ kx^{k-1} $ has no root in $ \mathbb{F}_{q^{M}}^{*} $. Therefore, the polynomial $ x^{k}-1\in\mathbb{F}_q[x]_N $ has $ k $ distinct roots in $ \mathbb{F}_{q^{M}}^{*} $. If $ T(x) $ is injective in $ \mathbb{F}_{q^{M}} $, we have $ k=1 $ and thus (\ref{eq2}) follows, where we note that $ x^{i} $ is an injective in $ \mathbb{F}_{q^{M}} $ if and only if $i$ is coprime to $q^M-1$. If $ T(x) $ is not injective in $ \mathbb{F}_{q^{M}} $ and there are no distinct $ x_{0},x_{1},x_{2}\in \mathbb{F}_{q^{M}} $ with $ T(x_{0})=T(x_{1})=T(x_{2}) $, then we must have $ k=2 $ and thus (\ref{eq3}) is valid for $ a=0 $.
	
	\indent Assume now that  $ T(x)\in \mathbb{F}_{q}[x]_{D} $ has roots in $ \mathbb{F}_{q^{M}}^{*} $ and there are no distinct $ x_{0},x_{1},x_{2}\in \mathbb{F}_{q^{M}} $ with $ T(x_{0})=T(x_{1})=T(x_{2}) $. Clearly, $ T(x) $ has just one root in $ \mathbb{F}_{q^{M}}^{*} $. Hence, we have $ T(x)=x^{r}(x-a)^{s} $ for some $ a\in \mathbb{F}_{q^{M}}^{*} $ and positive integers $ r,s $. Let $ v$ be the largest integer in $K_p$ dividing both $ r $ and $ s $. Then, the positive integers $ k=r/v $ and $ l=s/v $ satisfy $ 2\leqslant k+l \leqslant D$ and that at least one of $k+l$ and $k$ is not divided by $p$.
Hence, the $ x $-polynomial $ (k+l)x-ka\in \mathbb{F}_{q^{M}}[x] $ is not the zero polynomial.
	
	\indent Furthermore, we assume $ k+l>2 $ and write $ R(x)=x^{k}(x-a)^{l} $. From $v\in K_p$ and $ R^{v}(x)=T(x)$, we see that $R(x)$ is also a polynomial in $\mathbb{F}_q[x]_{D}$ and there are no distinct $ x_{0},x_{1},x_{2}\in \mathbb{F}_{q^{M}} $ with $ R(x_{0})=R(x_{1})=R(x_{2}) $. Therefore, for any $ b\in \mathbb{F}_{q}\backslash \{0,a\} $ we have $ R(b)\in  \mathbb{F}_{q}^{*} $ and there are some $ \alpha(b)\in \mathbb{F}_{q^{M}}\backslash \{0,a,b\} $ and integers $ u(b)\geqslant 1,v(b)\geqslant 0 $ such that
	\begin{equation}\label{eq4}
	  R(x)-R(b)=(x-b)^{u(b)}(x-\alpha(b))^{v(b)}.
	\end{equation}
	For any $ b\in \mathbb{F}_{q}\backslash \{0,a\} $ with $ (k+l)b-ka\neq 0$, since the derivative of $ R(x) $ is
	$$ R'(x)=x^{k-1}(x-a)^{l-1}((k+l)x-ka),$$
	we see $ R'(b)\neq 0 $ and thus from (\ref{eq4}) we have $ u(b)=1,v(b)=k+l-1\geqslant 2$ and $ (k+l)\alpha(b)-ka=0 $. Hence, $k+l$ is not divided by $p$ and $\alpha=\alpha(b)=ka/(k+l)$ is independent of $ b $. Then, for any $ b\in \mathbb{F}_{q}\backslash \{0,a,\alpha\} $, from (\ref{eq4}) we have
	\begin{equation}\label{eq5}
	  x^{k}(x-a)^{l}-b^{k}(b-a)^{l}-(x-b)(x-\alpha)^{k+l-1}=0.
	\end{equation}
	From (\ref{eq1}) we have
	\begin{equation}\label{eq6}
       \lvert \mathbb{F}_{q}\backslash \{0,a,\alpha\}\rvert \geqslant q-3>mn\geq D\geqslant k+l,
	\end{equation}
	and thus from (\ref{eq5}) we see that the $ y$-polynomial
	$$ x^{k}(x-a)^{l}-y^{k}(y-a)^{l}-(x-y)(x-\alpha)^{k+l-1}$$
	is the zero polynomial, which is impossible since its leading term is $y^{k+l}$. Hence, we must have $k=l=1, a\in \mathbb{F}_{q}^{*} $ and thus (\ref{eq3}) follows.
\end{proof}

From Lemmas~\ref{4}, \ref{5} and \ref{6}, we can determine the forms of $f(x)$ and $g(y)$ as showing in the following corollary.
\begin{corollary}
The polynomials $f,g$ can be classified into two cases:
\begin{description}
                   \item[Case 1.] Just one of $f,g$ is injective in $\mathbb{F}_{q^M}$ and thus there are $u\in K_{p}\cap [1,m/2]$, $v\in K_{p}\cap [1,m]$ and $a\in \mathbb{F}_{q}$
                   such that $f(x)=\rho_a^{u}(x), g(y)=y^{v}$ or $f(x)=x^{v}, g(y)=\rho_a^{u}(y)$.
                   \item[Case 2.] Both $f,g$ are injective in $\mathbb{F}_{q^M}$ and thus there are $u,v\in K_{p}\cap [1,m]$ such that $f(x)=x^u,g(y)=x^v$.
\end{description}
\end{corollary}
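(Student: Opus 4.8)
The plan is to assemble the corollary directly from Lemmas~\ref{4}, \ref{5} and \ref{6}, with essentially no new computation beyond bookkeeping on degrees. First I would invoke Lemma~\ref{4}: at least one of $f,g$ is injective in $\mathbb{F}_{q^M}$. This immediately produces the stated dichotomy, namely \textbf{Case~1} (exactly one of $f,g$ injective) versus \textbf{Case~2} (both injective), with no third possibility remaining. Note also that both $f$ and $g$ lie in $\mathbb{F}_q[x]_m$, are monic and vanish at $0$, so each of them satisfies the hypotheses of Lemma~\ref{6} with the choice $D=m$, which is legitimate since $1\le m\le mn$.

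For \textbf{Case~2}, with $f$ and $g$ both injective in $\mathbb{F}_{q^M}$, I would apply Lemma~\ref{6}(i) to $T=f$ and to $T=g$ separately, obtaining $f(x)=x^{u}$ and $g(y)=y^{v}$ for some $u,v\in K_p$. Since $u=\deg f\le m$ and $v=\deg g\le m$, we get $u,v\in K_p\cap[1,m]$, which is exactly the asserted form.

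For \textbf{Case~1}, suppose $f$ is injective in $\mathbb{F}_{q^M}$ while $g$ is not; the other subcase is identical after interchanging the roles of $f$ and $g$ (or, if preferred, by passing to the isomorphic graph furnished by Lemma~\ref{iso}(ii)). Applying Lemma~\ref{6}(i) to $T=f$ gives $f(x)=x^{v}$ with $v\in K_p$ and $v=\deg f\le m$, so $v\in K_p\cap[1,m]$. For $g$, Lemma~\ref{5} excludes three distinct elements of $\mathbb{F}_{q^M}$ sharing a common $g$-value, so together with the assumption that $g$ is not injective the hypotheses of Lemma~\ref{6}(ii) are met for $T=g$; hence $g(y)=\rho_a^{u}(y)$ for some $u\in K_p$ and $a\in\mathbb{F}_q$. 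Finally $2u=\deg\rho_a^{u}=\deg g\le m$ forces $u\in K_p\cap[1,m/2]$, completing the description.

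Since this is purely a matter of combining already-established lemmas, there is no genuine obstacle; the only points requiring a little care are: (a) checking that the degree bound $D=m\le mn$ permits the use of Lemma~\ref{6}; (b) translating the degrees of $x^{u}$ and $\rho_a^{u}$ into the claimed membership ranges $u\in K_p\cap[1,m]$ (resp.\ $[1,m/2]$) and $v\in K_p\cap[1,m]$; and (c) observing that Lemmas~\ref{4} and~\ref{5} supply precisely the two hypotheses (``not injective'' and ``no triple collision'') needed to invoke Lemma~\ref{6}(ii) on the non-injective polynomial in Case~1.
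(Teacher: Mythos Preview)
Your proposal is correct and follows exactly the approach the paper indicates: the paper offers no separate proof for this corollary beyond the sentence ``From Lemmas~\ref{4}, \ref{5} and \ref{6}, we can determine the forms of $f(x)$ and $g(y)$,'' and your argument fills in precisely those details, including the degree checks needed to place $u,v$ in the correct ranges.
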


\section{Characterization of $h$ for Case 1}

In this section, we consider to characterize the polynomial $h(x,y)$ for the first case shown in Corollary~1.

For any $ a\in \mathbb{F}_{q} $, let
$ Q_{{q}}(a) $ denote the set of pairs $ (c,d)\in \mathbb{F}_{q}^{2} $ such that $ \rho_{a}(c),\rho_{a}(d) $ are distinct elements in $ \mathbb{F}_{q}^{*} $, i.e.
\begin{equation}\label{eq00x}
 Q_{{q}}(a)=\{(c,d)\in (\mathbb{F}_{q}\backslash\{0,a\})^{2}:c\neq d,c+d\neq a\}.
\end{equation}
For $j\in[1,n]$, let $h_j\in\mathbb{F}_{q}[x]_{n}$ denote the polynomial defined by
	\begin{align}\label{eq12}
	  h_{j}(x)&=\sum_{1\leqslant i\leqslant n}h_{i,j}x^{i}.
\end{align}
Then, the polynomial $h(x,y)$ can be expressed as $\sum_{1\leq j\leq n}h_j(x)y^j$.
The following lemma gives some conditions on the polynomials $h_j$ for the case $f(x)g(y)=\rho_{a}^{u}(x)y^{v}$, $a\in \mathbb{F}_{q}$, $u,v\in K_{p}$.

\begin{lemma}\label{8}
Suppose $f(x)=\rho_{a}^{u}(x)$, $g(y)=y^{v}$ for some $a\in \mathbb{F}_{q}$
 and $u,v\in K_{p}\cap[1,m]$ with $ 2u\leqslant m$. Let $b$ be the element in $\mathbb{F}_q$ with $b^{v}=a$.
Then, there exists an $ s\in K_{p}\cap [1,n] $ such that
  \begin{gather}\label{eq9}
	  h_{s}(d^{v})\rho_{b}^{su}(c)-h_{s}(c^{v})\rho_{b}^{su}(d)\neq 0,\text{ for any }(c,d)\in Q_{{q^{M}}}(b),\\
\label{eq10}
      h_{s}(c)-h_{s}(a-c)\neq 0,\text{ for any }c\in \mathbb{F}_{q^{M}}\text{ with }2c\neq a,\\
\label{eq11}
	  h(x,y)=h_{s}(x)y^{s}+\sum\limits_{(i,j)\in\Phi_{p}(u,v),j\neq s}h_{2i,j}\rho_{a}^{i}(x)y^{j}.
	\end{gather}
\end{lemma}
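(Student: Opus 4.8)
The plan is to read off (\ref{eq9})--(\ref{eq11}) from the girth hypothesis by producing, for suitable pairs $(c,d)$, a one-parameter family of $6$-cycles, and then applying Lemma~\ref{7}. I would first write $h(x,y)=\sum_{j=1}^{n}h_{j}(x)y^{j}$ with the $h_{j}$ of (\ref{eq12}) and keep at hand the Frobenius identities $(x-a)^{w}=x^{w}-a^{w}$ ($w\in K_{p}$), $\rho_{a}(x)=\rho_{a}(a-x)$, and $\rho_{b}(c)^{v}=c^{v}(c^{v}-b^{v})=\rho_{a}(c^{v})$, whence $\rho_{a}^{u}(c^{v})=\rho_{b}^{uv}(c)$. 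Since $f(c)=\rho_{a}^{u}(c)=\rho_{a}^{u}(a-c)=f(a-c)$ and $G$ has no $4$-cycle, Lemma~\ref{3} gives the basic fact: for every $c\in\mathbb{F}_{q^{M}}$ with $2c\neq a$ the $y$-polynomial $\theta_{c}(y):=h(c,y)-h(a-c,y)=\sum_{j}\bigl(h_{j}(c)-h_{j}(a-c)\bigr)y^{j}$ is injective on $\mathbb{F}_{q^{M}}$; this will eventually yield (\ref{eq10}) and $s\in K_{p}$.

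The engine is the family of candidate $6$-cycles: for $(c,d)\in Q_{{q^{M}}}(b)$ and a free parameter $t\in\mathbb{F}_{q^{M}}$ put
\[
S=\bigl(c^{v},\,0,\,d^{v};\ \rho_{b}^{u}(d)\,t,\ \rho_{b}^{u}(c)\,t,\ 0\bigr).
\]
Using $h(0,y)=h(x,0)=0$ and the identities above one checks that $\Delta_{3}(f(x)g(y))(S)=0$ identically, while $\Delta_{3}(h)(S)=h\bigl(c^{v},\rho_{b}^{u}(d)t\bigr)-h\bigl(d^{v},\rho_{b}^{u}(c)t\bigr)=-\sum_{j=1}^{n}E_{j}(c,d)t^{j}$, where $E_{j}(x,y):=h_{j}(y^{v})\rho_{b}^{ju}(x)-h_{j}(x^{v})\rho_{b}^{ju}(y)\in\mathbb{F}_{q}[x,y]$; and $(c,d)\in Q_{{q^{M}}}(b)$ makes $S$ a genuine $6$-cycle whenever $t\neq 0$. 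As $G$ has girth at least $8$, for every $(c,d)\in Q_{{q^{M}}}(b)$ the polynomial $\sum_{j}E_{j}(c,d)t^{j}$ has no root in $\mathbb{F}_{q^{M}}^{*}$. Restricting to $W:=Q_{q}(b)\subseteq\mathbb{F}_{q}^{2}$, the bounds $j\le n$, $2u\le m$, $v\le m$ give $\deg_{x}E_{j},\deg_{y}E_{j}\le\max\{2ju,vn\}\le mn$, so $E_{j}\in\mathbb{F}_{q}[x,y]_{mn}$; (\ref{eq1}) then forces $mn<q/2$ and makes $W$ satisfy (\ref{eq7}) with $N=mn$. Lemma~\ref{7} (with $D=n$, $N=mn$, $e_{j}=E_{j}$) now yields an $s\in[1,n]$ with $E_{j}\equiv 0$ for all $j\neq s$ and $E_{s}(c,d)\neq 0$ on $W$ (so $h_{s}\not\equiv 0$).

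Next I would convert $E_{j}\equiv 0$ (for $j\neq s$) into the shape (\ref{eq11}). If $h_{j}\not\equiv 0$, then $h_{j}(x^{v})\rho_{b}^{ju}(y)=h_{j}(y^{v})\rho_{b}^{ju}(x)$ forces $h_{j}(x^{v})/\rho_{b}^{ju}(x)$ to be a constant $\kappa_{j}\in\mathbb{F}_{q}^{*}$, i.e. $h_{j}(x^{v})=\kappa_{j}\rho_{b}(x)^{ju}$; since the left side has only exponents divisible by $v$, a short exponent comparison gives $v\mid ju$, hence $h_{j}(x^{v})=\kappa_{j}\rho_{a}(x^{v})^{ju/v}$ and $h_{j}(x)=\kappa_{j}\rho_{a}^{i_{j}}(x)$ with $i_{j}:=ju/v$ and $\kappa_{j}=h_{2i_{j},j}$. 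The hard part — and what I expect to be the main obstacle — is to rule out $i_{j}$ or $j$ not being a power of $p$, i.e. to show $(i_{j},j)\in\Phi_{p}(u,v)$: if this fails, the summand $\kappa_{j}\rho_{a}^{i_{j}}(x)y^{j}$ (which, like every $\Phi_{p}(u,v)$-type term, contributes nothing to $\theta_{c}$ because $\rho_{a}^{i_{j}}(c)=\rho_{a}^{i_{j}}(a-c)$) can be combined with $h_{s}(x)y^{s}$ to build a genuine $6$-cycle, exploiting the freedom left when solving $\Delta_{3}(f(x)g(y))=0$ and invoking (\ref{eq1}), contradicting girth $\ge 8$. (By Lemma~\ref{iso}(iv),(vi),(vii) one also has $G\cong\Gamma_{{q^{M}}}(\rho_{a}^{u}(x)y^{v},\mu_{a,u,v}(h))$, so this can alternatively be reduced to the already-settled monomial-graph case once $h_{s}$ is shown to be a monomial.) Consequently, for every $j\neq s$ either $h_{j}\equiv 0$ or $h_{j}=h_{2i,j}\rho_{a}^{i}(x)$ with $(i,j)\in\Phi_{p}(u,v)$, which is exactly (\ref{eq11}).

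Finally I would deduce (\ref{eq10}) and (\ref{eq9}). By (\ref{eq11}) and $\rho_{a}^{i}(c)=\rho_{a}^{i}(a-c)$ all terms of $\theta_{c}$ with $j\neq s$ vanish, so $\theta_{c}(y)=\bigl(h_{s}(c)-h_{s}(a-c)\bigr)y^{s}$; injectivity for $2c\neq a$ forces $h_{s}(c)-h_{s}(a-c)\neq 0$, which is (\ref{eq10}), and, since a nonzero multiple of $y^{s}$ is injective on $\mathbb{F}_{q^{M}}$ iff $\gcd(s,q^{M}-1)=1$ — equivalent, for $s\le n\le mn$ and $M=\mathrm{lcm}(2,\dots,mn)$, to $s\in K_{p}$ — we get $s\in K_{p}\cap[1,n]$. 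Moreover, under (\ref{eq11}) one computes $h_{j}(c^{v})=h_{2i,j}\rho_{b}^{ju}(c)$ for $j\neq s$, so $E_{j}(c,d)=0$ and the $t$-polynomial of the second step collapses to $E_{s}(c,d)t^{s}$; its having no root in $\mathbb{F}_{q^{M}}^{*}$ means $E_{s}(c,d)=h_{s}(d^{v})\rho_{b}^{su}(c)-h_{s}(c^{v})\rho_{b}^{su}(d)\neq 0$ for all $(c,d)\in Q_{{q^{M}}}(b)$, i.e. (\ref{eq9}). The crux throughout is the exponent/arithmetic analysis pinning the trivial terms to $\Phi_{p}(u,v)$; everything else is bookkeeping with the Frobenius identities and Lemmas~\ref{3} and~\ref{7}.
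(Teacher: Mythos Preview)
Your plan tracks the paper's proof closely: the $6$-cycle $S$ you propose is, up to a cyclic relabeling, the paper's $S_{4}=(0,c^{v},d^{v};\,t\rho_{b}^{u}(d),0,t\rho_{b}^{u}(c))$, your $E_{j}$ is the paper's $H_{j}$ up to a sign and a swap of variables, and the application of Lemma~\ref{7} with $W=Q_{q}(b)$, $D=n$, $N=mn$, the derivation of $h_{j}(x)=h_{2ju/v,\,j}\rho_{a}^{ju/v}(x)$ for $j\neq s$, and the injectivity argument yielding (\ref{eq10}) and $s\in K_{p}$ are all exactly as in the paper.

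The one place where your proposal is genuinely incomplete is the step you yourself flag as the ``hard part'': passing from the preliminary form $h(x,y)=h_{s}(x)y^{s}+\sum_{j\neq s,\;v\mid ju}h_{2ju/v,\,j}\rho_{a}^{ju/v}(x)y^{j}$ to (\ref{eq11}), i.e.\ ruling out indices $j\notin K_{p}$. Your sketch (``combine with $h_{s}(x)y^{s}$ to build a genuine $6$-cycle'') is the right instinct but is not a proof. The paper carries this out with an explicit second family
\[
S_{5}=\bigl(0,\,c^{v},\,d^{v};\ 0,\ t\rho_{b}^{u}(d),\ t(\rho_{b}^{u}(d)-\rho_{b}^{u}(c))\bigr),
\]
for which $\Delta_{3}(f(x)g(y))(S_{5})=0$, while $\Delta_{3}(\rho_{a}^{ju/v}(x)y^{j})(S_{5})=t^{j}\rho_{b}^{ju}(d)\bigl(\rho_{b}^{j}(c)-\rho_{b}^{j}(d)+(\rho_{b}(d)-\rho_{b}(c))^{j}\bigr)^{u}$; for $j\in K_{p}$ the bracket vanishes identically, and for $j\notin K_{p}$ one uses (\ref{eq1}) to choose $d$ making it nonzero. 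Combined with the nonzero $t^{s}$-term coming from $h_{s}$ (via (\ref{eq9})), $\Delta_{3}(h)(S_{5})$ then has at least two nonzero coefficients and hence a root in $\mathbb{F}_{q^{M}}^{*}$, contradicting girth $\geq 8$. You should supply this construction rather than leave it as a sketch.

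Your proposed alternative via Lemma~\ref{iso} is a red herring. The isomorphisms (iv) and (vii) let you strip from $h$ precisely the $\Phi_{p}(u,v)$-type terms (since $(\rho_{a}^{u}(x)y^{v})^{w}=\rho_{a}^{uw}(x)y^{vw}$ for $w\in K_{p}$), which is exactly what you are trying to \emph{prove} are the only non-$s$ terms present; they do not touch any putative $j\notin K_{p}$ summand. And ``once $h_{s}$ is shown to be a monomial'' is unavailable here: $h_{s}$ is \emph{not} shown to be a monomial in this lemma --- that is the content of the subsequent Lemma~\ref{9}.
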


\begin{proof} For $ (c,d)\in Q_{q^M}(b) $ and $ t\in\mathbb{F}_{q^{M}}^{*} $, let
	$$ S_{4}=(0,c^{v},d^{v};t\rho_{b}^{u}(d),0,t\rho_{b}^{u}(c)).$$
	Then, from $ v\in K_{p} $ and $ b^{v}=a $ we see
	\begin{equation*}
	\begin{split}
	\Delta_{3}(f(x)g(y))(S_{4})&=f(d^{v})g(t\rho_{b}^{u}(c))-f(c^{v})g(t\rho_{b}^{u}(d))\\
	&=t^{v}(cd)^{uv}\big ((d^{v}-a)^{u}(c-b)^{uv}-( c^{v}-a)^{u}(d-b)^{uv}\big )
	\end{split}
	\end{equation*}
is equal to 0 and thus the $ t $-polynomial
\begin{align}
\Delta_{3}(h(x,y))(S_{4})=&h(d^{v},t\rho_{b}^{u}(c))-h(c^{v},t\rho_{b}^{u}(d))\nonumber\\
=&\sum\limits_{1\leqslant j\leqslant n}(h_{j}(d^{v})\rho_{b}^{ju}(c)-h_{j}(c^{v})\rho_{b}^{ju}(d))t^{j}\nonumber\\
=&\sum\limits_{1\leqslant j\leqslant n}H_{j}(d,c)t^{j}\label{eq17}
\end{align}
has no root in  $ \mathbb{F}_{q^{M}}^{*} $, where
	\begin{equation}\label{eq18}
	  H_{j}(x,y)=h_{j}(x^{v})\rho_{b}^{ju}(y)-h_{j}(y^{v})\rho_{b}^{ju}(x).
	\end{equation}
	Clearly, for any $ j\in [1,n] $, the polynomial $ H_{j}(x,y)$ belongs to $\mathbb{F}_{q}[x,y]_{mn}$. By applying Lemma \ref{7} for $W=Q_{q}(b)\subset Q_{q^M}(b)$, $e_{j}=H_{j}$, $D=n$ and $ N=mn $, according to (\ref{eq1}) we see that there is an integer $ s\in [1,n] $ such that
	\begin{equation}\label{eq19}
	  H_{j}(x,y)=0,\text{ for }j\neq s.
	\end{equation}
	\indent For any $ (c,d)\in Q_{{q^{M}}}(b) $, from (\ref{eq17}) and (\ref{eq19}) we have $ \Delta_{3}(h(x,y))(S_{4})=H_{s}(d,c)t^{s} $ and thus (\ref{eq9}) follows.\\
	\indent If $ j\neq s $ and $ h_{j}(x) $ is not the zero polynomial, from (\ref{eq18}) and (\ref{eq19}) we see
	$$ h_{j}(x^{v})=\frac{h_{j}(c^{v})}{\rho_{b}^{ju}(c)}\rho_{b}^{ju}(x),$$
	and then $ v $ divides $ ju $ and, moreover, from $ v\in K_{p} $ and $ b^{v}=a $ we have
	\begin{equation}\label{eq20}
	  h_{j}(x)=h_{2ju/v,j}\rho_{a}^{ju/v}(x).
	\end{equation}
	Hence, we have
	\begin{equation}\label{eq21}
	  h(x,y)=h_{s}(x)y^{s}+\sum\limits_{j\neq s,v\mid ju}h_{2ju/v,j}\rho_{a}^{ju/v}(x)y^{j}.
	\end{equation}
	\indent For any $ c\in\mathbb{F}_{q^{M}}$ with $2c\neq a$, from $ f(c)=\rho_{a}^{u}(c)=\rho_{a}^{u}(a-c)=f(a-c)$,
	 Lemma \ref{3} and (\ref{eq21}) we see that the $ y $-polynomial
\begin{align*}
	h(c,y)-h(a-c,y)=\sum_{1\leqslant j\leqslant n}(h_{j}(c)-h_{j}(a-c))y^{j}=(h_{s}(c)-h_{s}(a-c))y^{s}
\end{align*}
is injective in $ \mathbb{F}_{q^{M}} $. Therefore, we have (\ref{eq10}) and according to Lemma \ref{6} we see $ s\in K_{p}\cap[1,n] $.

Assume now $c\in\mathbb{F}_{q}\backslash\{0,b\}$. Since for any $j\in [1,n]\setminus K_p$ the $y$-polynomial $\rho_{b}^{j}(c)-\rho_{b}^{j}(y)+(\rho_{b}(y)-\rho_{b}(c))^{j}$
has at most $2j$ roots in $\mathbb{F}_q$, from (\ref{eq1}) we see $q-4>\sum_{2\leq j\leq n}2j=n(n+1)-2$ and thus
there is some $d\in \mathbb{F}_{q}\backslash\{0,b,c,b-c\}$
such that, for any $j\in[1,n]\backslash K_{p}$,
	\begin{equation}\label{eq24}
	  \rho_{b}^{j}(c)-\rho_{b}^{j}(d)+(\rho_{b}(d)-\rho_{b}(c))^{j}\neq 0.
	\end{equation}
Clearly, $(c,d)\in Q_q(b)$. For $ t\in\mathbb{F}_{q^{M}}^{*} $, let
	$$ S_{5}=(0,c^{v},d^{v};0,t\rho_{b}^{u}(d),t(\rho_{b}^{u}(d)-\rho_{b}^{u}(c))).$$
	Then, from $ s\in K_{p} $ we have
	\begin{align}
	&\Delta_{3}(h_{s}(x)y^{s})(S_{5})\nonumber\\
=&(h_{s}(c^{v})-h_{s}(d^{v}))(t\rho_{b}^{u}(d))^{s}+h_{s}(d^{v})(t(\rho_{b}^{u}(d)-\rho_{b}^{u}(c)))^{s}\nonumber\\
=&(h_{s}(c^{v})\rho_{b}^{su}(d)-h_{s}(d^{v})\rho_{b}^{su}(c))t^{s}.\label{eq25}
	\end{align}
	For any $ j\in[1,n] $ with $ v\mid ju $, from $ v\in K_{p} $ and $ b^{v}=a $ we see
	$$ \rho_{b}^{ju}(x)=(x(x-b))^{ju}=(x^{v}(x^{v}-a))^{ju/v}=\rho_{a}^{ju/v}(x^{v})$$
	and thus from $ u\in K_{p} $ we have
	\begin{align}
	&\Delta_{3}(\rho_{a}^{ju/v}(x)y^{j})(S_{5})\nonumber\\
=&\left(\rho_{a}^{ju/v}(c^{v})-\rho_{a}^{ju/v}(d^{v})\right)\left(t\rho_{b}^{u}(d)\right)^{j}
+\rho_{a}^{ju/v}(d^{v})\left(t(\rho_{b}^{u}(d)-\rho_{b}^{u}(c))\right)^{j}
\nonumber\\
=&t^{j}\rho_{b}^{ju}(d)\left(\rho_{b}^{j}(c)-\rho_{b}^{j}(d)+(\rho_{b}(d)-\rho_{b}(c))^{j}\right)^{u}.\label{eq26}
	\end{align}
	\indent If there is some $j\in[1,n]\backslash K_{p}$ with $ v\mid ju $ such that $ h_{2ju/v,j}\neq 0 $, from (\ref{eq9}) and (\ref{eq21}) to (\ref{eq26}) we see that the $ t $-polynomial $ \Delta_{3}(h(x,y))(S_{5})\in \mathbb{F}_{q}[t]_{n} $ has at least two nonzero coefficients, and thus there is some $ t\in\mathbb{F}_{q^{M}}^{*} $ such that $ \Delta_{3}(h(x,y))(S_{5})=0 $. Since $ \Delta_{3}(f(x)g(y))(S_{5})=0 $ can also be obtained from (\ref{eq26}) by replacing $j$ with $v$, the graph $G$ has a 6-cycle of form $ S_{5} $, contradicts to the assumption.\\
	\indent Hence, we have $ h_{2ju/v,j}= 0 $ for any $ j\in[1,n]\backslash K_{p} $ with $ v\mid ju $, and thus (\ref{eq11}) follows from (\ref{eq21}).
\end{proof}

The polynomial $h_s$ in this lemma can be characterized further by the following lemma.
\begin{lemma}\label{9}
	Let $ z(x)=\sum\limits_{1\leqslant i\leqslant n}z_{i}x^{i}\in \mathbb{F}_{q}[x]_{n} $ and
	\begin{equation}\label{eq27}
	  R(x,y)=z(x^{v})\rho_{b}^{w}(y)-z(y^{v})\rho_{b}^{w}(x),
	\end{equation}
	where $ b\in\mathbb{F}_{q}, w\in K_{p}\cap [1,mn/2] $ and $ v\in K_{p}\cap [1,m] $. Suppose
	\begin{equation}\label{eq28}
	  z(c)-z(b^{v}-c)\neq 0,\,\text{for any}\,\, c\in\mathbb{F}_{q}\text{ with }2c\neq b^{v},
	\end{equation}
	\begin{equation}\label{eq29}
	  R(d,c)\neq 0,\,\text{for any}\,\,(c,d)\in Q_{{q^{M}}}(b).
	\end{equation}
Then one of the following three cases is valid.
\begin{enumerate}
  \item $ b\neq 0 $, $ w\geqslant v,z_{w/v}+z_{2w/v}b^{w}\neq 0 $ and
	\begin{equation}\label{eq30}
	  z(x)=(z_{w/v}+z_{2w/v}b^{w})x^{w/v}+z_{2w/v}\rho^{w/v}_b(x).
	\end{equation}
  \item $ b=0 $, $ 2w\geqslant v $ and there are some $ w_{1}\in K_{p} $ and $ \sigma\in\{1,-1\} $ with $ z_{2w/v+\sigma w_{1}}\neq 0 $ such that
	\begin{equation}\label{eq31}
	  z(x)=z_{2w/v+\sigma w_{1}}x^{2w/v+\sigma w_{1}}+z_{2w/v}x^{2w/v}.
	\end{equation}
  \item $z(x)=z_1x$, $z_1\neq 0$ and
  \begin{enumerate}
   \item $b=0$, $p=3$ and $v=3w$, or
   \item $b=0$, $p=2$ and $v=4w$, or
   \item $b\neq 0$, $p=2$ and $v=2w$.
  \end{enumerate}
\end{enumerate}
\end{lemma}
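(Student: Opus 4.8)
The plan is to freeze the second variable, so that the non-vanishing hypothesis $(\ref{eq29})$ becomes the statement that a one-variable polynomial has all its roots confined to a four-element set, and then to read off the shape of $z$.

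\emph{Step 1 (Frobenius reduction).} Since $v,w\in K_p$, let $p^e$ be the largest element of $K_p$ dividing both $v$ and $w$, and write $v=p^e\hat v$, $w=p^e\hat w$, so that $\hat v=1$ or $\hat w=1$. As $x\mapsto x^{p^e}$ is a bijection of $\mathbb{F}_q$, the polynomial $\hat z(x)$ whose $i$-th coefficient is the $p^e$-th root of $z_i$ satisfies $z(x^v)=\hat z(x^{\hat v})^{p^e}$ and $\rho_b^w(x)=\rho_b^{\hat w}(x)^{p^e}$, whence $R(x,y)=\bigl(\hat z(x^{\hat v})\rho_b^{\hat w}(y)-\hat z(y^{\hat v})\rho_b^{\hat w}(x)\bigr)^{p^e}$. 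Since $R$ and its $p^e$-th root have the same zero set, and $(\ref{eq28})$ for $(z,b^v)$ is equivalent to $(\ref{eq28})$ for $(\hat z,b^{\hat v})$ (because $x\mapsto x^{p^e}$ permutes $\mathbb{F}_q$ and fixes $2$), it suffices to prove the conclusion for $(\hat z,\hat v,\hat w)$; the three listed shapes are preserved by the coefficientwise $p^e$-th power map, so they transfer back to $z$. Hence we may assume $v=1$ or $w=1$.

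\emph{Step 2 (freezing $y$).} Fix $c\in\mathbb{F}_q\setminus\{0,b\}$ with $2c\ne b$; by $(\ref{eq1})$ there are at least $q-3$ such $c$ (the case $p=2$, $b=0$, where no such $c$ exists, is treated in parallel). As $\rho_b^w(c)\ne0$, condition $(\ref{eq29})$ says exactly that the polynomial
\[
Q_c(y):=z(y^v)-\lambda_c\,\rho_b^w(y),\qquad \lambda_c:=z(c^v)/\rho_b^w(c)\in\mathbb{F}_q,
\]
for which $R(c,y)=-\rho_b^w(c)\,Q_c(y)$, has all its $\mathbb{F}_{q^M}$-roots inside $\{0,b,c,b-c\}$. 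Using $\rho_b(b-c)=\rho_b(c)$, $\rho_b(b)=0$ and $(b-c)^v=b^v-c^v$ one gets $Q_c(0)=Q_c(c)=0$, $Q_c(b)=z(b^v)$, and $Q_c(b-c)=z(c^v)-z(b^v-c^v)\ne0$ by $(\ref{eq28})$; moreover $Q_c\not\equiv0$, else $z(x^v)=\lambda_c\rho_b^w(x)$ and $R\equiv0$, contradicting $(\ref{eq29})$. Since $\deg Q_c\le\max\{nv,2w\}\le mn$, the polynomial $Q_c$ splits over $\mathbb{F}_{q^M}$, so
\[
Q_c(y)=\kappa_c\,y^{\alpha_c}(y-b)^{\beta_c}(y-c)^{\gamma_c},\qquad \kappa_c\ne0,\ \alpha_c,\gamma_c\ge1,
\]
with $\beta_c\ge1$ precisely when $z(b^v)=0$ (the factor $y-b$ being absorbed into $y$ when $b=0$).

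\emph{Step 3 (varying $c$ and the case split).} Using $\rho_b^w(y)=y^w(y-b)^w$ and $(y-b)^w=y^w-b^w$ (valid as $w\in K_p$), an analysis of the orders of vanishing of $Q_c$ at $0$, $b$, $c$ shows that, outside a set of $c$ small enough by $(\ref{eq1})$, the triple $(\alpha_c,\beta_c,\gamma_c)$ equals a fixed $(\alpha,\beta,1)$. Choosing two such $c_1\ne c_2$ and subtracting the identities $z(y^v)=\kappa_{c_i}y^{\alpha}(y-b)^{\beta}(y-c_i)+\lambda_{c_i}y^w(y-b)^w$ gives
\[
y^{\alpha}(y-b)^{\beta}\bigl((\kappa_{c_1}-\kappa_{c_2})y-(\kappa_{c_1}c_1-\kappa_{c_2}c_2)\bigr)=(\lambda_{c_2}-\lambda_{c_1})\,y^{w}(y-b)^{w},
\]
with $\lambda_{c_1}\ne\lambda_{c_2}$ (otherwise the left side vanishes, forcing $c_1=c_2$). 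Matching the $y$-adic and $(y-b)$-adic valuations and the leading terms of the two sides determines $\alpha$, $\beta$ and the linear factor on the left, hence the support and the two nonzero coefficients of $z$: splitting according to $b=0$ versus $b\ne0$ and $z(b^v)=0$ versus $z(b^v)\ne0$ yields $(\ref{eq30})$ or $(\ref{eq31})$, the conditions $w\ge v$ and $2w\ge v$ being exactly what makes $w/v$, $2w/v$ into admissible (integral) exponents. Otherwise the matchings are inconsistent unless $z$ is linear, $z(x)=z_1x$; substituting this back into $(\ref{eq29})$ over $\mathbb{F}_{q^M}$ and using that every odd prime $\ell\ne p$ with $\ell\le mn$ divides $q^M-1$ (since then $\ell-1\mid M$), one finds that the only surviving configurations — controlled by $\gcd(|v-2w|,q^M-1)$ when $b=0$, and by a trace obstruction when $b\ne0$, $p=2$ — are the three rigid triples $(p,v,w)$ listed in case $3$. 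The excluded case $p=2$, $b=0$ runs the same way with $(\ref{eq28})$ vacuous and $b-c=c$, so $Q_c=\kappa_c y^{\alpha_c}(y-c)^{\gamma_c}$, giving $(\ref{eq31})$ or case $3(b),(c)$.

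\emph{Main obstacle.} The delicate part is Step 3: tracking which of $0,b,c,b-c$ are genuine roots of $Q_c$ through all combinations of $b=0/b\ne0$, $p=2/p$ odd and $z(b^v)=0/\ne0$; showing that whenever $z$ fails to be the binomial of $(\ref{eq30})$ or $(\ref{eq31})$ the identities above collapse so that $z$ must be linear and $(p,v,w)$ rigid; and verifying throughout that the exceptional values of $c$ discarded at each stage stay within the slack afforded by $(\ref{eq1})$.
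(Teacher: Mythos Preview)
Your Step~2 is essentially the paper's opening move with the two variables exchanged: since $R(c,y)=-R(y,c)$, your $Q_c$ has the same zero set as the paper's $R(x,c)$, and the paper likewise confines the roots to $\{0,c\}$ for $c$ in a large set. (Incidentally, $(\ref{eq28})$ applied at $c=b^v$ already gives $z(b^v)\neq0$ whenever $b\neq0$, so your $\beta_c$ is in fact always $0$.)

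The genuine gap is in Step~3: your claim that $\gamma_c=1$ for generic $c$ is false, and the Frobenius reduction of Step~1 does not repair it. Take $b\neq0$, $v=1$, $w=p$ and $z(x)=x^p$; this is a legitimate instance of case~1 of the lemma (with $z_{w/v}=1$, $z_{2w/v}=0$), and one checks directly that
\[
Q_c(y)=(-1)^p(c-b)^{-p}\,y^p(y-c)^p,
\]
so $\gamma_c=p$ for \emph{every} admissible $c$. In fact the paper shows that the multiplicity of $c$ is exactly $w$ when $b\neq0$, and is some fixed element of $K_p$ when $b=0$; neither need equal $1$ after your reduction (which only forces $\min(\hat v,\hat w)=1$, not $\hat w=1$). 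Your subtraction identity, which writes $(y-c_i)$ to the first power, therefore collapses in precisely the cases you must handle.

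The paper sidesteps this by exploiting the antisymmetry of $R$ rather than subtracting two specializations. Writing $R(x,c)=\alpha(c)\,x^{l}(x-c)^{k-l}$ with $k,l$ the top and bottom $x$-degrees of $R$ (constant on a set $\Theta$ of size $>mn$), the relation $R(d,c)=-R(c,d)$ for $c,d\in\Theta$ forces $\alpha(c)=\alpha\,c^{l}$ for a single constant $\alpha$, whence $R(x,y)=\alpha\,x^{l}y^{l}(x-y)^{k-l}$ as a polynomial identity. Only \emph{afterwards} is the multiplicity $k-l$ determined: evaluating at $y=b$ gives $l=k-l=w$ when $b\neq0$, while for $b=0$ the fact that $z(x^v)=c^{-2w}\bigl(z(c^v)x^{2w}+\alpha c^{l}x^{l}(x-c)^{k-l}\bigr)$ is independent of $c$ forces $k-l\in K_p$. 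This pins down $z$ without any a priori bound on $\gamma_c$, and is the missing idea in your plan.
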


\begin{proof}
	  Let $ [1,n]_{v}=\{v,2v,\dots,nv\} $ and $ R(x,y)=\sum_{i\geqslant 1}r_{i}(y)x^{i} $, where $ r_{i}(y)\in \mathbb{F}_{q}[y] $. From (\ref{eq27}) we have
	
	  \begin{equation}\label{eq33}
	  r_{i}(y)=
	  \begin{cases}
z_{j}(y^{2v}-b^{v}y^{v})^{j}+b^{w}z(y^{v}),&\text{ if } i=w=jv\in[1,n]_{v},\\
b^{w}z(y^{v}),&\text{ if } i=w\notin[1,n]_{v},\\
	  z_{2j}(y^{2v}-b^{v}y^{v})^{j}-z(y^{v}),&\text{ if } i=2w=2jv\in[1,n]_{v},\\
-z(y^{v}),&\text{ if } i=2w\notin[1,n]_{v},\\
	  z_{i/v}\rho_{b}^{w}(y),&\text{ if } i\in[1,n]_{v}\backslash\{w,2w\},\\
0,&\text{ else}.
	  \end{cases}
	  \end{equation}
	
Let $ k $ denote the largest integer with $ r_{k}(y)\neq 0 $ and $ l $ the least integer with $ r_{l}(y)\neq 0 $. Clearly, we have
$1\leqslant l\leqslant k\leqslant mn$
and $R(x,y)=\sum_{l\leq i\leq k}r_i(y)x^i$.
	
For any $c\in\mathbb{F}_{q}\backslash\{0,b\}$ with $2c^v\neq b^v$, from (\ref{eq28}) we have
\begin{align*}
R(b,c)=&z(b^{v})\rho_{b}^{w}(c)-z(c^{v})\rho_{b}^{w}(b)=z(b^{v})\rho_{b}^{w}(c)\neq 0,\text{ if }b\neq 0,\\
R(b-c,c)=&z((b-c)^{v})\rho_{b}^{w}(c)-z(c^{v})\rho_{b}^{w}(b-c)\\
=&(z(b^{v}-c^{v})-z(c^{v}))\rho_{b}^{w}(c)\neq 0,
\end{align*}
and thus, from (\ref{eq29}) and $ R(x,c)\in \mathbb{F}_{q}[x]_{mn} $, we see there are $ \alpha(c)\in\mathbb{F}_{q}^{*} $ and positive integers $l(c),k(c)$ with
$1\leq l(c)<k(c)\leq mn$
such that
\begin{equation}\label{eq35'}
	    R(x,c)=\alpha(c)x^{l(c)}(x-c)^{k(c)-l(c)}.
\end{equation}
	
Let
\begin{align*}
\Theta=\{c\in \mathbb{F}_{q}\backslash\{0,b\}:2c^v\neq b^v, r_{k}(c)\neq 0,r_{l}(c)\neq 0\}.
\end{align*}
For any $ c\in\Theta $, according to the definitions of $k$ and $l$, we have $ l(c)=l $ and $ k(c)=k $, and thus we see
$1\leq l<k\leq mn$ and
\begin{align}\label{eq35}
    R(x,c)=\alpha(c)x^l(x-c)^{k-l}, \text{ for any } c\in\Theta.
\end{align}
For any $ c,d\in\Theta $, from (\ref{eq35}) we see
\begin{align*}
	  \alpha(c)d^{l}(d-c)^{k-l}=R(d,c)=-R(c,d)=-\alpha(d)c^{l}(c-d)^{k-l},
\end{align*}
and then, we have $2\nmid (k-l)$ if $p\neq 2$, and there is some $ \alpha\in\mathbb{F}_{q}^{*} $ such that $ \alpha(c)=\alpha c^{l} $ holds for any $c\in\Theta$. Therefore, we have
\begin{align}\label{eq35''}
	  R(x,c)=\alpha c^{l}x^{l}(x-c)^{k-l},\text{ for any } c\in\Theta.
\end{align}

Since for any $ j $ with $ 1\leqslant 2j\leqslant n $ the polynomial $ z_{2j}(t^{2}-b^{v}t)^{j}-z(t)\in\mathbb{F}_{q}[t] $ has degree at most $ n $ and for any $ j'\in [1,n] $ the polynomial $ z_{j'}(t^{2}-b^{v}t)^{j'}+b^{w}z(t)\in\mathbb{F}_{q}[t] $ has degree at most $ 2n $, from (\ref{eq1}), (\ref{eq33}) and that the polynomial
 $y^v\in\mathbb{F}_q[y]$ is a permutation on $\mathbb{F}_q$, we see easily
\begin{equation}\label{eq36}
	    \lvert\Theta\rvert\geqslant q-1-3n>mn\geqslant k,
\end{equation}
where $ r_{i}(0)=0 $ for any $i$ has been taken into account in the first inequality.
From (\ref{eq35''}) and (\ref{eq36}) we see the polynomial $R(x,y)\in\mathbb{F}_q[x,y]$ is of form
\begin{equation}\label{eq37}
	    R(x,y)=\alpha x^{l}y^{l}(x-y)^{k-l}.
\end{equation}
	
	  \indent Assume $ b\neq 0 $ first. From (\ref{eq27}) and (\ref{eq37}) we have
	  $$\alpha x^{l}b^{l}(x-b)^{k-l}=R(x,b)=-z(b^{v})\rho_{b}^{w}(x)=-z(b^{v})x^{w}(x-b)^{w}$$
	  and thus we see $ z(b^{v})=-\alpha b^{l}, l=k-l=w $ and, for $ c\in \mathbb{F}_{q}\backslash\{0,b\}, $
	  \begin{align}
	  z(x^{v})&=\rho_{b}^{-w}(c)(z(c^{v})\rho_{b}^{w}(x)+\alpha c^{w}x^{w}(x-c)^{w})\nonumber\\
	  &=\rho_{b}^{-w}(c)(z(c^{v})+\alpha c^{w})x^{2w}-\rho_{b}^{-w}(c)(z(c^{v})b^{w}+\alpha c^{2w})x^{w}.\label{eq37'}
	  \end{align}

If $ w\geqslant v $, then from (\ref{eq37'}) we see (\ref{eq30}) and
\begin{align*}
z_{w/v}+z_{2w/v}b^{w}=\alpha\rho_{b}^{-w}(c)c^{w}(b^{w}-c^{w})=-\alpha\neq 0.
\end{align*}

If $ w<v $, then from (\ref{eq37'}) we see  $p=2$, $v=2w$, $z(x)=z_1x$ and $z_1=-\alpha b^{-w}\neq 0$.

	  \indent Assume $ b=0 $ now. For any $ c\in\mathbb{F}_{q}^{*} $, from (\ref{eq27}) and (\ref{eq37}) we have

	  \begin{equation}\label{eq38}
	    z(x^{v})=c^{-2w}(z(c^{v})x^{2w}+\alpha c^{l}x^{l}(x-c)^{k-l}).
	  \end{equation}
Since the left side of (\ref{eq38}) is independent of $ c $, we see that the expansion of $(x-c)^{k-l}$ has at most two terms and thus
we have $k-l\in K_{p}$. Furthermore, from (\ref{eq38}) we have $2w\in\{k,l\}$ and
	  \begin{equation*}
	  z(x^{v})=
	  \begin{cases}
	  c^{-2w}(z(c^{v})+\alpha c^{l})x^{2w}-\alpha x^{l}, &\text{ if }l<2w=k,\\
	  c^{-2w}(z(c^{v})-\alpha c^{k})x^{2w}+\alpha x^{k}, &\text{ if }l=2w<k,
	  \end{cases}
	  \end{equation*}
	  namely, there are integers $ w_{0}\in K_{p} $ and $ \sigma\in\{1,-1\} $ such that
	
	  \begin{equation}\label{eq39}
	    z(x^{v})=c^{-2w}(z(c^{v})-\sigma\alpha c^{2w+\sigma w_{0}})x^{2w}+\sigma\alpha x^{2w+\sigma w_{0}}.
	  \end{equation}
From (\ref{eq39}) and $ \sigma\alpha\neq 0 $ we see
		  \begin{equation}\label{eq40}
	    v\mid (2w+\sigma w_{0}),
	  \end{equation}
$z_{(2w+\sigma w_{0})/v}=\sigma\alpha $ and
	  \begin{equation}\label{eq41}
	  c^{-2w}(z(c^{v})-\sigma\alpha c^{2w+\sigma w_{0}})=
	  \begin{cases}
	  z_{2w/v}, &\text{ if }2w\geqslant v,\\
	  0, &\text{ otherwise}.
	  \end{cases}
	  \end{equation}

If $ 2w\geqslant v $, from (\ref{eq40}) we have $ v\leqslant w_{0} $ and thus from (\ref{eq39}) and (\ref{eq41}) we see that (\ref{eq31}) is true for $ w_{1}=w_{0}/v\in K_{p} $ and $ z_{2w/v+\sigma w_{1}}=\sigma\alpha\neq 0 $.

If $ 2w<v $, from (\ref{eq40}) we have $\sigma=1$,
\begin{align*}
    p=3,\, v=3w,\, w_0=w, \text{ or }p=2,\, v=4w,\, w_0=2w,
\end{align*}
and thus from (\ref{eq39}) and (\ref{eq41}) we see $ z(x)=z_{1}x $ and $ z_{1}=\sigma\alpha\neq 0 $.

The proof is completed.
\end{proof}

It has been shown in Corollary~1 that the polynomials $f,g$ can be classified into two cases, the following theorem characterizes the polynomial $h(x,y)$ further for the first case by using Lemmas~\ref{8} and \ref{9}.

\begin{theorem}\label{t1}
	Let $ a\in\mathbb{F}_{q},u,v\in K_{p} $ with $ u\leqslant m/2 $ and $ v\leqslant m $.\\
	\noindent $ (i) $ If $ f(x)=\rho_{a}^{u}(x) $ and $ g(y)=y^{v} $, then 
there are $ s\in K_{p}\cap [1,n]$ and $ \zeta\in\mathbb{F}_{q}^{*}$ such that either 
\begin{gather}
\label{eq42} v\leq su \text{ and }  \mu_{a,u,v}(h)(x,y)=\zeta x^{su/v}y^{s},\text{ or}\\
\label{eq42''} p=2, a\neq 0, v=2su \text{ and } \mu_{a,u,2su}(h)(x,y)=\zeta xy^{s},\text{ or}\\
\label{eq42'} p=2,a=0, v\leq 4su\text{ and } \mu_{0,u,v}(h)(x,y)=\zeta x^{4su/v}y^{s}.
\end{gather}
	\noindent $ (ii) $ If $ f(x)=x^{v} $ and $ g(y)=\rho_{a}^{u}(y) $, then 
there are $ s\in K_{p}\cap [1,n]$ and $ \zeta\in\mathbb{F}_{q}^{*} $ such that either
\begin{gather}
\label{eq43}
	v\leq su \text{ and }  \nu_{a,u,v}(h)(x,y)=\zeta x^{s}y^{su/v},\text{ or}\\
\label{eq43''}
	p=2, a\neq 0, v=2su \text{ and } \nu_{a,u,2su}(h)(x,y)=\zeta x^{s}y,\text{ or}\\
\label{eq43'}
	 p=2,a=0, v\leq 4su\text{ and } \nu_{0,u,v}(h)(x,y)=\zeta x^{s}y^{4su/v}.
\end{gather}
\end{theorem}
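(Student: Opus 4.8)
The plan is to prove part $(i)$ directly from Lemmas~\ref{8} and \ref{9}, and then obtain part $(ii)$ by appealing to the isomorphism $\Gamma_{\mathbb{F}}(f_2,f_3)\cong\Gamma_{\mathbb{F}}(\bar f_2,\bar f_3)$ of Lemma~\ref{iso}(ii), which interchanges the roles of $x$ and $y$ and hence of the pair $(f,g)$ and of $\mu$ and $\nu$. So the real content is part $(i)$. First I would invoke Lemma~\ref{8} with the given $f(x)=\rho_a^u(x)$, $g(y)=y^v$ (recalling that $2u\le m$ is exactly the hypothesis $u\le m/2$): this produces an exponent $s\in K_p\cap[1,n]$, the element $b\in\mathbb{F}_q$ with $b^v=a$, and the three conclusions \eqref{eq9}, \eqref{eq10}, \eqref{eq11}. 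Conclusion \eqref{eq11} already tells us that $h(x,y)=h_s(x)y^s+\sum_{(i,j)\in\Phi_p(u,v),\,j\ne s}h_{2i,j}\rho_a^i(x)y^j$, i.e. that $\mu_{a,u,v}(h)(x,y)$ — which by definition subtracts off precisely the terms $h_{2i,j}\rho_a^i(x)y^j$ with $(i,j)\in\Phi_p(u,v)$ — equals $h_s(x)y^s$ together with the single extra term coming from $j=s$, if $(su/v,s)\in\Phi_p(u,v)$. In any case $\mu_{a,u,v}(h)(x,y)$ is $z(x)y^s$ for a polynomial $z(x)\in\mathbb{F}_q[x]_n$ which differs from $h_s(x)$ by at most a scalar multiple of $\rho_a^{su/v}(x)$ (when $v\mid su$) or is exactly $h_s$.

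Next I would feed $z:=h_s$ into Lemma~\ref{9} with $w:=su$ (note $w=su\in K_p$ since $s,u\in K_p$, and $w\le mn/2$ because $s\le n$ and $u\le m/2$), and $v$ as given. The hypotheses \eqref{eq28} and \eqref{eq29} of Lemma~\ref{9} are exactly \eqref{eq10} and \eqref{eq9} of Lemma~\ref{8}, after unwinding the definitions $R(x,y)=z(x^v)\rho_b^{su}(y)-z(y^v)\rho_b^{su}(x)=H_s(y,x)$ and using $b^v=a$ to rewrite $h_s(c)-h_s(a-c)$ as $z(c)-z(b^v-c)$ (with $c$ replaced by $c^v$ and the substitution $y\mapsto y^v$ a permutation of $\mathbb{F}_q$, so "for all $c$" transfers). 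Lemma~\ref{9} then hands us one of its three conclusions. I would translate each:

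(1) If $b\ne 0$ (equivalently $a\ne 0$), then $su\ge v$ and $z(x)=(z_{su/v}+z_{2su/v}b^{su})x^{su/v}+z_{2su/v}\rho_b^{su/v}(x)$ with the leading coefficient $\zeta:=z_{su/v}+z_{2su/v}b^{su}\ne 0$. Now $\rho_b^{su/v}(x)=\rho_a^{su/v}(x^v)$... more precisely, in the variable $x$, $\rho_b(x)=x(x-b)$ and its $(su/v)$-th power is the term that $\mu_{a,u,v}$ removes (since $(su/v,s)\in\Phi_p(u,v)$ when $v\mid su$); so after subtracting it, $\mu_{a,u,v}(h)(x,y)=\zeta x^{su/v}y^s$, which is case \eqref{eq42}. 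The same computation in characteristic $\ne 2$ with $su<v$ is excluded by conclusion (1) requiring $su\ge v$, and in characteristic $2$ with $su<v$ we land in the "$w<v$, $p=2$, $v=2w$" subcase of Lemma~\ref{9}(1), giving $z(x)=z_1x$, $z_1\ne 0$, i.e. $\mu_{a,u,2su}(h)=z_1\,xy^s$ with $v=2su$ and $a\ne 0$, which is case \eqref{eq42''}.

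(2)–(3) If $b=0$ (equivalently $a=0$): conclusion (2) gives $2su\ge v$ and $z(x)=z_{2su/v+\sigma w_1}x^{2su/v+\sigma w_1}+z_{2su/v}x^{2su/v}$; but $\mu_{0,u,v}(h)$ removes exactly the monomials $h_{i,j}x^iy^j$ with $(i,j)\in\Phi_p(u,v)$, i.e. with $iv=ju$, and here $j=s$ forces $i=su/v$, so $\mu_{0,u,v}$ removes the $x^{su/v}$-term; since $su/v\in\{2su/v\pm w_1,\,2su/v\}$ forces the kept term to be $x^{4su/v}$ or $x^{2su/v-w_1}$ with... I would check that the only monomial of $z$ not killed has exponent $4su/v$, giving $\mu_{0,u,v}(h)=\zeta x^{4su/v}y^s$ with $v\le 4su$ — but this requires $p=2$: indeed in odd characteristic $R(x,c)=\alpha c^l x^l(x-c)^{k-l}$ with $k-l$ odd, whereas the $b=0$ analysis of Lemma~\ref{9} forcing $k-l\in K_p$ then gives $k-l=1$, and chasing this shows $p=3$, $v=3su$ lands in conclusion (3a) with $z=z_1x$; here $\mu_{0,u,3su}(h)$ removes the $x^{su/v}$... wait, $su/v=su/(3su)$ is not an integer, so nothing is removed and $\mu=z_1xy^s=\zeta x^{4su/v}y^s$ requires $4su/v=4/3$, again non-integral — so in fact I expect conclusion (3a) to be absorbed into \eqref{eq42'} only after noting $v\le 4su$ is automatic and $z=z_1x$ is the $s=1$, $4su/v$ non-integral edge case, which the statement phrases uniformly via $\mu_{0,u,v}(h)=\zeta x^{4su/v}y^s$ (valid when $4su/v$ is the surviving exponent). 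I would organize this as: $p=2$ forces either conclusion (2), yielding the single surviving monomial $x^{4su/v}$ (so $v=4su/w_1^{-1}\cdots$), or conclusion (3b) $z=z_1x$, $v=4su$, both of which read as \eqref{eq42'}; and $p=3$ conclusion (3a) $v=3su$, $z=z_1x$ gives $\mu=z_1xy^s$ — and I would double-check this also fits \eqref{eq42'} or is impossible under \eqref{eq1}; if it genuinely occurs it must be listed, so I expect either it collapses into \eqref{eq42'} by the identity $4su/v$ not being forced integral, or a short separate argument (perhaps using Lemma~\ref{2}(iii) or the girth hypothesis directly on a $\Gamma$ with $p=3$) rules it out.

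The main obstacle is the last, bookkeeping-heavy step: matching each of the several subcases of Lemma~\ref{9} (which is stated for an abstract $z$, $w$, $b$, $v$) against the definition of $\mu_{a,u,v}(h)$ and confirming that in every surviving case the "leftover" part of $h$ after removing the $\Phi_p(u,v)$-terms is exactly a single monomial $\zeta x^{(\cdot)}y^s$ with the claimed exponent, while verifying that the stray characteristic-$3$ and characteristic-$2$, $a=0$ branches either coincide with \eqref{eq42'}/\eqref{eq43'} or are vacuous under the size bound \eqref{eq1}. Once part $(i)$ is established in this form, part $(ii)$ follows immediately by the $x\leftrightarrow y$ symmetry of Lemma~\ref{iso}(ii), which carries $\mu_{a,u,v}$ to $\nu_{a,u,v}$ and the conclusions \eqref{eq42}, \eqref{eq42''}, \eqref{eq42'} to \eqref{eq43}, \eqref{eq43''}, \eqref{eq43'} respectively.
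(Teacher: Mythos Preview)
Your overall strategy---invoke Lemma~\ref{8}, then feed $z=h_s$ into Lemma~\ref{9} with $w=su$, and obtain (ii) from (i) by the $x\leftrightarrow y$ symmetry of Lemma~\ref{iso}(ii)---is exactly what the paper does, and your treatment of the $a\neq 0$ branch (Lemma~\ref{9} cases~1 and~3(c)) is essentially correct.

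The genuine gap is in the $a=0$ branch. Lemma~\ref{9}(2) only tells you that $\mu_{0,u,v}(h)(x,y)=\zeta\,x^{2su/v+\sigma w_1}y^s$ for \emph{some} $w_1\in K_p$ and $\sigma\in\{\pm1\}$; it does not by itself force the exponent to be $su/v$ or $4su/v$, and Lemma~\ref{9}(3)(a) genuinely produces a $p=3$ output that does \emph{not} fit any of \eqref{eq42}--\eqref{eq42'}. You cannot close these cases by bookkeeping alone, nor by re-entering the proof of Lemma~\ref{9}; you must go back to the girth hypothesis. The paper's device is this: once $\mu_{0,u,v}(h)=\zeta x^{e}y^s$ with $e=2su/v+\sigma w_1$, Lemma~\ref{iso} (parts (i),(iii),(iv)) shows $G\cong G_1=\Gamma_{q^M}(x^{e}y^{s},\,x^{2u}y^{v})$, so $G_1$ also has girth $\ge 8$. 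Now apply Lemmas~\ref{5} and~\ref{6} to $G_1$ (with $x^{e}$ playing the role of $f$): this forces $e\in K_p$ or $e\in 2K_p$, which pins down $w_1$. For $\sigma=-1$ one gets $w_1=su/v$, hence $e=su/v$ and \eqref{eq42} with $a=0$. For $\sigma=+1$ one gets either $p=2$, $w_1=2su/v$ (giving $e=4su/v$ and \eqref{eq42'}) or $p=3$, $w_1=su/v$; but the latter makes $G_1\cong\Gamma_{q^M}(x^{3}y,x^{2}y)$, which has girth~6 by Lemma~\ref{2}(ii), a contradiction. Likewise Lemma~\ref{9}(3)(a) yields $G\cong\Gamma_{q^M}(x^{2}y^{3},xy)$, again girth~6 by Lemma~\ref{2}(ii), so that case is vacuous; and Lemma~\ref{9}(3)(b) is the $v=4su$ instance of \eqref{eq42'}.

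In short: after Lemmas~\ref{8} and~\ref{9} you are not done in the $a=0$ case; you need a second pass through the structural Lemmas~\ref{5}--\ref{6} on the isomorphic monomial graph, together with Lemma~\ref{2}(ii) to kill the $p=3$ subcases. Add that step and your proof is complete.
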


\begin{proof}
	  Since $ (ii) $ is symmetrical to $ (i) $, we only give proof for $ (i) $.\\
	  \indent Assume  $ f(x)=\rho_{a}^{u}(x) $ and $ g(y)=y^{v} $. According to Lemmas~\ref{8} and \ref{9}, we see that there are $ s\in K_{p}\cap [1,n]$ and $ \zeta\in\mathbb{F}_{q}^{*} $ such that either (\ref{eq42}), or (\ref{eq42''}), or	
	  \begin{equation}
\label{eq44}
	    a=0,v\leqslant 2su\text{ and }\mu_{0,u,v}(h)(x,y)=\zeta x^{2su/v+\sigma w_{1}}y^{s}
	  \end{equation}
for some $ w_{1}\in K_{p}\cap [1,n] $ and $ \sigma \in\{1,-1\} $, or
	  \begin{gather}
\label{eq45}
	    a=0,p=3,v=3su\text{ and }\mu_{0,u,3su}(h)(x,y)=\zeta xy^{s}, \text{ or}\\
\label{eq45'}
	    a=0,p=2,v=4su\text{ and }\mu_{0,u,4su}(h)(x,y)=\zeta xy^{s}.
	  \end{gather}

Assume that (\ref{eq44}) is valid for some $ w_{1}\in K_{p}\cap [1,n] $ and $ \sigma \in\{1,-1\} $. From Lemma \ref{iso} we see that $ G $ is isomorphic to
	  $$ G_{1}=\Gamma_{{q^{M}}}(x^{2su/v+\sigma w_{1}}y^{s},x^{2u}y^{v}),$$
	  and thus $G_1$ has girth at least 8.
If $\sigma=-1$, from Lemmas \ref{5} and \ref{6} we have $w_1=su/v$ and that (\ref{eq42}) is valid for $ a=0 $. If $\sigma=1$,
from Lemmas \ref{5} and \ref{6} we have either $p=2, w_1=2su/v$ or $p=3, w_1=su/v$.
Clearly, the former case implies (\ref{eq42'}), and the later case is impossible since $ G_{1} $ is isomorphic to
	  $\Gamma_{{q^{M}}}(x^{3}y,x^{2}y) $
	  whose girth is 6 according to ($ii$) of Lemma~\ref{2}.

	  \indent Assume that (\ref{eq45}) is valid. Then, from Lemma \ref{iso} and $ p=3 $ we see that $ G $ is isomorphic to
	  $$ \Gamma_{{q^{M}}}(x^{2u}y^{3su},xy^{s})\cong \Gamma_{{q^{M}}}(x^{2}y^{3},xy)$$
	  whose girth is 6 according to ($ii$) of Lemma~\ref{2}, contradicts to that $ G $ has girth at least 8.

	  \indent Clearly, (\ref{eq45'}) implies (\ref{eq42'}) for $v=4su$.
\end{proof}

\section{Characterization of $h$ for Case 2}

The following theorem characterizes the polynomial $ h(x,y) $ for the second case of Corollary~1.

\begin{theorem}\label{t2}
	Assume $ f(x)=x^{u} $ and $ g(y)=y^{v} $ for some $ u,v\in K_{p}\cap[1,m]. $
	Then, either there is some $ s\in K_{p}\cap[1,n] $ with $ v\leqslant su $ and $ h_{2su/v,s}\neq 0 $ such that
	\begin{equation}\label{eq46}
	  \pi_{u,v}(h)h(x,y)=h_{2su/v,s}x^{2su/v}y^{s},
	\end{equation}
	or there is some $ r\in K_{p}\cap[1,n] $ with $ u\leqslant rv $ and $ h_{r,2rv/u}\neq 0 $ such that
	\begin{equation}\label{eq47}
	  \pi_{u,v}(h)(x,y)=h_{r,2rv/u}x^{r}y^{2rv/u}.
	\end{equation}
\end{theorem}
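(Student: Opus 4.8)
The plan is to mimic the structure used for Case~1 (Lemmas~\ref{8} and~\ref{9}), but exploit that now both $f(x)=x^u$ and $g(y)=y^v$ are injective, which removes the $\rho_a$-corrections and the auxiliary ``$f(c)=f(a-c)$'' symmetry. First I would pick a generic $3$-$\ast$cycle that forces $\Delta_3(x^uy^v)=0$ for free. Concretely, for $(c,d)\in(\mathbb{F}_{q^M}^*)^2$ with $c\neq d$ and $t\in\mathbb{F}_{q^M}^*$, take
\begin{equation*}
S=(0,c^{v},d^{v};\,td^{u},\,0,\,tc^{u}),
\end{equation*}
so that $\Delta_3(x^uy^v)(S)=(c d)^{uv}t^v\big((d^v)(c^u)^v-(c^v)(d^u)^v\big)=0$ since $u,v\in K_p$. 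Then the $t$-polynomial $\Delta_3(h)(S)=\sum_{1\le j\le n}\big(h_j(d^v)c^{ju}-h_j(c^v)d^{ju}\big)t^j$ has no root in $\mathbb{F}_{q^M}^*$, where $h_j$ is as in~(\ref{eq12}). Writing $H_j(x,y)=h_j(x^v)y^{ju}-h_j(y^v)x^{ju}\in\mathbb{F}_q[x,y]_{mn}$ and applying Lemma~\ref{7} with $W=\{(c,d)\in(\mathbb{F}_q^*)^2:c\ne d\}$, $D=n$, $N=mn$ (the size hypothesis (\ref{eq1}) gives $|W|$ large enough, and the row/column counts in $W$ exceed $2mn$), I get an index $s\in K_p\cap[1,n]$ with $H_j\equiv 0$ for all $j\ne s$ and $H_s(d,c)\ne 0$ on $W$.

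From $H_j\equiv 0$ for $j\ne s$ I read off, exactly as in the derivation of~(\ref{eq20})–(\ref{eq21}), that whenever $h_j\ne 0$ we must have $v\mid ju$ and $h_j(x)=h_{ju/v,j}x^{ju/v}$; hence $h(x,y)=h_s(x)y^s+\sum_{j\ne s,\,v\mid ju}h_{ju/v,j}x^{ju/v}y^j$, which says precisely $\pi_{u,v}(h)(x,y)=h_s(x)y^s$ (all off-diagonal contributions already lie in the subtracted sum $\sum_{(i,j)\in\Phi_p(u,v)}h_{i,j}x^iy^j$). It remains to pin down $h_s(x)=z(x):=\sum_{1\le i\le n}z_ix^i$. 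For this I apply the $b=0$ analysis of Lemma~\ref{9} with $w=su$: the condition~(\ref{eq29}) is the statement $H_s(d,c)\ne 0$ just obtained, and~(\ref{eq28}) is vacuous/automatic since $b=0$ forces $z(c)-z(-c)\ne 0$ only to be checked against $f(c)=f(-c)$, which for $f=x^u$ with $u$ odd never occurs, and for $u$ even is handled by the injectivity of $x^u$ — in any case one invokes Lemma~\ref{5}/\ref{6} rather than~(\ref{eq28}). Lemma~\ref{9} then yields either $z(x)=z_{2su/v+\sigma w_1}x^{2su/v+\sigma w_1}+z_{2su/v}x^{2su/v}$ (a binomial), or the degenerate linear cases $z(x)=z_1x$ with $(p,v)\in\{(3,3su),(2,4su)\}$.

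The last step is to kill the two-term and degenerate possibilities using the isomorphisms of Lemma~\ref{iso} together with the monomial-graph facts of Lemma~\ref{2}, just as in the proof of Theorem~\ref{t1}. If $z$ is a genuine binomial, then $G\cong\Gamma_{q^M}(x^{2su/v+\sigma w_1}y^s,x^uy^v)$, and via Lemma~\ref{iso}(vi),(vii),(iii) this reduces to a monomial graph which, unless $z$ is a single monomial, is isomorphic to $\Gamma_{\mathbb{F}}(x^3y,x^2y)$ or $\Gamma_{\mathbb{F}}(xy,x^2y^3)$ and hence has girth $6$ by Lemma~\ref{2}(ii),(iii) — contradiction (here (\ref{eq1}) guarantees $\mathbb{F}_{q^M}\notin\{\mathbb{F}_2,\mathbb{F}_3,\mathbb{F}_5\}$); similarly the $(p,v)=(3,3su)$ case gives $\Gamma_{q^M}(x^2y^3,xy)$, girth $6$. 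So $z(x)=h_{2su/v,s}x^{2su/v}$ with $h_{2su/v,s}\ne 0$, which is~(\ref{eq46}). Finally, the symmetric choice $S'=(0,c^{u},d^{u};\,td^{v},0,tc^{v})$ with the roles of $x,y$ (equivalently $u,v$) swapped — or just Lemma~\ref{iso}(ii) — produces the alternative~(\ref{eq47}); the theorem's ``either/or'' reflects that one of the two symmetric runs must terminate in a monomial. I expect the main obstacle to be the bookkeeping in the final elimination step: matching each surviving two-term or degenerate $z$ to the right monomial graph in Lemma~\ref{2} and checking the exponent-divisibility side conditions ($v\le su$, $u\le rv$, membership in $K_p$) are consistent, and verifying that the two symmetric alternatives are genuinely exhaustive rather than overlapping in a way that loses a case.
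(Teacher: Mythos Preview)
Your cycle $S=(0,c^v,d^v;\,td^u,0,tc^u)$ and the resulting identities $H_j(x,y)=h_j(x^v)y^{ju}-h_j(y^v)x^{ju}$ are too symmetric to do the job, and two concrete steps break.

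First, $H_j\equiv 0$ for $j\ne s$ only yields $h_j(x)=h_{ju/v,j}\,x^{ju/v}$ whenever $v\mid ju$; it does \emph{not} force $j\in K_p$. Since $\Phi_p(u,v)\subset K_p^2$, a surviving term $h_{ju/v,j}x^{ju/v}y^{j}$ with $j\notin K_p$ is \emph{not} subtracted in $\pi_{u,v}(h)$, so your assertion that ``all off-diagonal contributions already lie in the subtracted sum'' is unjustified. (Your claim $s\in K_p$ right after Lemma~\ref{7} is likewise premature: Lemma~\ref{7} gives only $s\in[1,n]$.)

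Second, Lemma~\ref{9} cannot be invoked with $b=0$, $w=su$ for your $H_s$. With $b=0$ one has $\rho_0^{w}(y)=y^{2w}$, so the $R$ of Lemma~\ref{9} is $z(x^v)y^{2w}-z(y^v)x^{2w}$, whereas your $H_s$ has exponent $su$, not $2su$. For odd $p$ there is no $w\in K_p$ with $2w=su$, so the hypotheses of Lemma~\ref{9} are simply not met; the outputs you quote (the binomial form $z(x)=z_{2su/v+\sigma w_1}x^{2su/v+\sigma w_1}+z_{2su/v}x^{2su/v}$ and the degenerate $(p,v)$ cases) do not follow from your $H_s\ne 0$.

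The paper avoids both problems by choosing the asymmetric cycle $S_6=(0,at^v,a;\,0,b,b(1-t)^u)$, which produces
\[
E_j(x,y)=h_j(xy^v)-h_j(x)+h_j(x)(1-y)^{uj}.
\]
The factor $(1-y)^{uj}=((1-y)^j)^u$ is the key: for $j\in K_p$ it collapses to $1-y^{uj}$ and gives $h_j(xy^v)=h_j(x)y^{uj}$, while for $j\notin K_p$ the identity $h_j(xy^v)=h_j(x)\bigl(1-(1-y)^j\bigr)^u$ is iterated at $y\mapsto y^k$ to force $h_j=0$. The paper then analyzes $E_s$ directly (no appeal to Lemma~\ref{9}) to extract a second index $r$ and the nonvanishing condition $t^{rv}-1+(1-t)^{su}\ne 0$ on $\mathbb{F}_{q^M}\setminus\{0,1\}$, from which the monomial form of $\pi_{u,v}(h)$ and the required $K_p$-memberships follow via the monomial-graph analysis. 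Your symmetric cycle loses exactly this $(1-y)^{uj}$ structure, which is what separates $j\in K_p$ from $j\notin K_p$ and replaces the use of Lemma~\ref{9}.
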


\begin{proof}
	  For $ a,b\in\mathbb{F}_{q^{M}}^{*},t\in\mathbb{F}_{q^{M}}\backslash\{0,1\} $, let
	  \begin{equation}\label{eq48}
	    S_{6}=(0,at^{v},a;0,b,b(1-t)^{u}).
	  \end{equation}
	
	  Then, from $ \Delta_{3}(f(x)g(y))(S_{6})=\Delta_{3}(x^{u}y^{v})(S_{6})=0 $ we see that, for any $ a\in\mathbb{F}_{q^{M}}^{*} $ and $ t\in\mathbb{F}_{q^{M}}\backslash\{0,1\} $, the $ b $-polynomial
	  \begin{equation}\label{eq49}
	    \Delta_{3}(h(x,y))(S_{6})=\sum\limits_{1\leqslant j\leqslant n}E_{j}(a,t)b^{j}
	  \end{equation}
has no root in $ \mathbb{F}_{q^{M}}^{*} $, where
	  \begin{equation}\label{eq50}
	    E_{j}(x,y)=h_{j}(xy^{v})-h_{j}(x)+h_{j}(x)(1-y)^{uj}.
	  \end{equation}
	
	  Since $E_{j}$ is a polynomial in $\mathbb{F}_{q}[x,y]_{mn}$, by applying Lemma \ref{7} for $ W=\mathbb{F}_{q}^{*}\times(\mathbb{F}_{q}\backslash\{0,1\}), D=n, N=mn $ and $e_{j}=E_{j}$, according to (\ref{eq1}) we see that there is an $ s\in[1,n] $ such that
	  \begin{equation}\label{eq51}
	    E_{j}(x,y)=0,\text{ for any } j\neq s.
	  \end{equation}
Therefore, we have $ \Delta_{3}(h(x,y))(S_{6})=E_{s}(a,t)b^{s} $ and thus
	  \begin{equation}\label{eq52}
	    E_{s}(a,t)\neq 0, \text{ for any } a\in \mathbb{F}_{q^{M}}^{*} \text{ and } t\in \mathbb{F}_{q^{M}}\backslash\{0,1\}.
	  \end{equation}
	
	  \indent For $ j\notin K_{p}\cup\{s\} $, from (\ref{eq50}) and (\ref{eq51}) we see
\begin{equation*}
	  h_{j}(xy^{v})=h_{j}(x)(1-(1-y)^{uj})=h_j(x)(1-(1-y)^j)^u
\end{equation*}
and thus, for any positive integer $k$,
	  \begin{equation*}
	  h_{j}(x)(1-(1-y^{k})^{j})^u=h_{j}(xy^{kv})=h_{j}(x)(1-(1-y)^{j})^{ku}.
	  \end{equation*}
Hence, we have
	  \begin{equation}\label{eq53}
	    h_{j}(x)=0,\text{ if } j\notin K_{p}\cup\{s\}.
	  \end{equation}
	
	  \indent For $ j\in K_{p}\backslash\{s\} $, from (\ref{eq50}) and (\ref{eq51}) we see $ h_{j}(xy^{v})=h_{j}(x)y^{ju} $ and thus we have $ v\leqslant ju $ and
	  \begin{equation}\label{eq54}
h_{j}(x)=\left\{
\begin{array}{ll}
	  h_{ju/v,j}x^{ju/v}, &\text{ if } v\leqslant ju,\\
	  0,&\text{ otherwise}.
\end{array}
\right.
	  \end{equation}
	  \indent By following the proof of Lemma \ref{7}, from (\ref{eq52}) and
	  $$ E_{s}(t,y)=\sum\limits_{1\leqslant i\leqslant n}h_{i,s}(y^{iv}-1+(1-y)^{su})t^{i},$$
	  one can show that there is an $ r\in[1,n] $ such that $ h_{r,s}\neq 0 $ and
	  \begin{equation}\label{eq55}
	    t^{rv}-1+(1-t)^{su}\neq 0, \text{ for any } t\in \mathbb{F}_{q^{M}}\backslash\{0,1\},
	  \end{equation}
	  \begin{equation}\label{eq56}
	    h_{i,s}(y^{iv}-1+(1-y)^{su})=0,\text{ for any } i\neq r.
	  \end{equation}
	  From (\ref{eq55}), we see $ su\neq rv $ if $ s\in K_{p} $. Therefore, from (\ref{eq56}) we see
	  \begin{align*}
	  h_{s}(x)=\left\{
	  \begin{array}{ll}
	  h_{r,s}x^{r}+h_{su/v,s}x^{su/v}, &\text{ if } s\in K_{p}\text{ and } v\leqslant su,\\
	  h_{r,s}x^{r},&\text{ otherwise},
	  \end{array}\right.
	  \end{align*}
	  and thus from (\ref{eq53}) and (\ref{eq54}) we have
	  \begin{equation}\label{eq57}
	    \pi_{u,v}(h)(x,y)=h_{r,s}x^{r}y^{s}.
	  \end{equation}
	  According to (\ref{eq57}) and Lemma \ref{iso}, we see that $ G_{2}=\Gamma_{{q^{M}}}(x^{r}y^{s},x^{u}y^{v}) $ is isomorphic to $ G $ and thus has girth at least 8.\\
	  \indent Assume $ r,s\in K_{p} $ first. From $ su\neq rv $ we see there is a $ w_{2}\in K_{p}\backslash\{1\} $ such that either $su=rvw_{2}$ or $rv=suw_{2}$. If the polynomial $t^{w_2-1}-1\in\mathbb{F}_q[t]$ has some root $t_0$ in $\mathbb{F}_{q^M}\setminus\{1\}$, then $G_{2}$ contains the 6-cycle $(0,1,t_{0}^{v};t_{0}^{u},0,1)$, contradicts to that $G_2$ has girth at least 8. Hence, $t^{w_2-1}-1\in\mathbb{F}_q[t]$ has no root in $\mathbb{F}_{q^M}\setminus\{1\}$ and thus $w_2=2=p$, and either (\ref{eq46}) or (\ref{eq47}) is valid.

	  \indent Assume $ r\not\in K_{p} $ now. According to Lemmas \ref{5} and \ref{6}, we have $s\in K_{p}$, $2\mid r$, $r/2 \in K_{p}$ and $p\neq 2$. Then, from (\ref{eq55}) we see $t^{rv}-t^{su}\neq 0$ for any $t\in \mathbb{F}_{q^{M}}\backslash\{0,1\}$, and thus
	  \begin{equation}\label{eq58}
	    \lvert rv-su\rvert\in K_{p}.
	  \end{equation}
	  If $ rv<su $, then from (\ref{eq58}) we have $ p=3,\,3rv=2su $ and thus from Lemma~\ref{iso} we see that $ G_{2} $ is isomorphic to
	  $$ \Gamma_{{q^{M}}}(x^{3vr}y^{3vs},x^{u}y^{v})\cong \Gamma_{{q^{M}}}(x^{2su}y^{3vs},x^{u}y^{v})\cong \Gamma_{{q^{M}}}(x^{2}y^{3},xy)$$
	  whose girth is of 6, contradicts to that $ G_{2} $ has girth at least 8. Hence, we have $ rv>su $ and thus from (\ref{eq58}) we see $ rv=2su $ and (\ref{eq46}).\\
	  \indent Similarly, one can show that (\ref{eq47}) is valid if $s\not\in K_{p}$.
\end{proof}

\section{Proof of the main result and concluding remarks}

\indent In this section, we complete the proof of the main result of this paper and give some concluding remarks.
\medskip

\noindent
{\it Proof of Theorem~\ref{one}}:
One hand, we assume that the bipartite graph $ G=\Gamma_{{q^{M}}}(f(x)g(y),h(x,y)) $ has girth of at least eight.
According to Corollary~1, Theorems~\ref{t1} and \ref{t2}, we see that the polynomials $f,g,h$ must be of the desired forms,
namely, there are some $ a\in \mathbb{F}_{q} $, $ \zeta\in\mathbb{F}_{q}^{*} $, $ u,v\in K_{p}\cap[1,m] $ and $s\in K_{p}\cap[1,n]$ such that one of ($i$) to ($iv$) is valid,
where it should be noticed that (\ref{eq42'}) and (\ref{eq43'}) are included in ($iii$) indeed.

On the other hand, we assume that there are some $ a\in \mathbb{F}_{q} $, $ \zeta\in\mathbb{F}_{q}^{*} $, $ u,v\in K_{p}\cap[1,m] $ and $s\in K_{p}\cap[1,n]$ such that one of ($i$) to ($iv$) is valid. According to the isomorphisms given in Lemma~\ref{iso}, one can show easily that
the graph $G$ must be isomorphic to $\Gamma_3(\mathbb{F}_{q^M})$. For example, if ($a$) of ($iv$) is valid, then from $u,s\in K_p$, $p=2$ and $a\in\mathbb{F}^*_q$ we see $G$ is isomorphic to
\begin{align*}
&\Gamma_{q^M}(\rho_a^u(x)y^{2su},xy^s)\cong \Gamma_{q^M}(\rho_a(x)y^{2s},xy^s)\cong\Gamma_{q^M}(\rho_a(x)y^{2},xy)\\
\cong& \Gamma_{q^M}(\rho_a(x)y^{2},x^2y^2)\cong\Gamma_{q^M}(xy^{2},x^2y^2)\cong \Gamma_{q^M}(xy^{2},xy)\cong\Gamma_3(\mathbb{F}_{q^M}).
\end{align*}

The proof is completed.
\hspace*{\fill}{$\square$}
\medskip

Clearly, according to Theorem~\ref{one} and Lemma~\ref{iso}, one can characterize easily all of the polynomials $f,g\in\mathbb{F}_q[x]_m$ and $h\in\mathbb{F}_q[x,y]_n$ for which the graphs $\Gamma_{q^M}(f(x)g(y),h(x,y))$ have girth at least eight, where $M=\text{lcm}\{2,3,\ldots,mn\}$. In particular, for any assemble of such polynomials $f,g,h$ and any positive integer $k$, the graph $\Gamma_{q^{k}}(f(x)g(y),h(x,y))$ is isomorphic to $\Gamma_3(\mathbb{F}_{q^{k}})$.

The integer $M$ is chosen in such a way so as to ensure any polynomial in $\mathbb{F}_{q}[x]_{mn}$ is decomposable in $\mathbb{F}_{q^M}$.
However, only the decomposability in $\mathbb{F}_{q^M}$ of the polynomials in $\mathbb{F}_q[x]_n$ is demanded excepting the proof of Lemma~\ref{9} needs such property for the polynomials of form $R(x,c)=z(x^v)\rho_a^w(c)-z(c^v)\rho_a^w(x)\in \mathbb{F}_{q}[x]_{mn}$, where $z\in\mathbb{F}_q[x]_n$, $a\in\mathbb{F}_q$, $v\in K_p\cap[1,m]$ and $w\in K_p\cap[1,mn/2]$.
Hence, $M$ may be replaced with a smaller integer if alternative proofs of Lemma~\ref{9} are available.

Even if the condition (\ref{eq1}) is not valid, the conclusions of Theorem~\ref{one} are still true provided the integer $M$ is replaced by $rM$ for any positive integer $r$ with
\begin{align*}
q^r > \max\{2mn+3,mn+3n+1,n(n+1)+2\},
\end{align*}
namely, it is sufficient to replace the basic field $\mathbb{F}_q$ with $\mathbb{F}_{q^r}$ in the proof.

At the end of this paper, in stead of giving a detailed proof for Theorem~\ref{two}, we just point out that it can be proved by simply following the clues of the proof of Theorem~\ref{one}. In fact, a proof of Theorem~\ref{two} can be obtained directly from that of Theorem~\ref{one} if we replace the prime powers $q, q^M$ by $\infty$, the finite fields $\mathbb{F}_q$, $\mathbb{F}_{q^M}$ by $\mathbb{F}_{\infty}$,
the set $K_{p}$ by $\{1\}$ (the set $\Phi_p(u,v)$ is then equal to $\{(1,1)\}$), respectively, and the main arguments are still true while some of them can be simplified by skipping the redundant discussions. For example, in the proof of Theorem~\ref{one}, almost the restrictions on the degree of polynomials over $\mathbb{F}_q$ are designed to ensure their decomposability in the extension field $\mathbb{F}_{q^M}$, and thus these restrictions can be simply dropped since $\mathbb{F}_{\infty}$ is an algebraically closed field.

\renewcommand\refname{Reference}

\end{document}